\theoremstyle{plain}
\newtheorem{theorem}{Theorem}[section]
\newtheorem{lemma}[theorem]{Lemma}
\newtheorem{proposition}[theorem]{Proposition}
\newtheorem{definition}[theorem]{Definition}
\theoremstyle{definition}
\theoremstyle{remark}
\numberwithin{equation}{section}
\newcommand{\f}{\varphi}
\newcommand{\e}{\varepsilon}
\newcommand{\R}{\mathbb{R}}
\newcommand{\N}{\mathbb{N}}
\newcommand{\spt}{\mathrm{supp}}
\newcommand{\supp}{\mathrm{supp}\,}
\newcommand{\M}{\mathcal{M}}
\newcommand{\weakly}{\rightharpoonup}           % weak convergence
\newcommand{\weakstar}{\stackrel{*}{\weakly}}   % weakstarconvergence
\newcommand{\jj}{\hat J}
\newcommand{\dist}{\text{dist}}
\title[] {Attractive Riesz potentials acting on \\ hard spheres} 
\author[A. Kubin]
{A. Kubin}
\address[Andrea Kubin]{Dipartimento di Matematica ``Guido Castelnuovo'', Sapienza Universit\`a di Roma, P.le Aldo Moro 5, I-00185 Roma, Italy}
\email[A. Kubin]{kubin@mat.uniroma1.it}
\author[M. Ponsiglione]
{M. Ponsiglione}
\address[Marcello Ponsiglione]{Dipartimento di Matematica ``Guido Castelnuovo'', Sapienza Universit\`a di Roma, P.le Aldo Moro 5, I-00185 Roma, Italy}
\email[M. Ponsiglione]{ponsigli@mat.uniroma1.it}
\begin{document}

\begin{abstract}
In this paper we introduce a model for hard spheres interacting through attractive Riesz type potentials, and we study its thermodynamic limit.  We show that the tail energy enforces optimal packing and round macroscopic shapes.

\vskip5pt
\noindent
\textsc{Keywords.} Riesz kernels, hard spheres, optimal packing, crystallization,  fractional perimeters,  $\Gamma$-convergence, isoperimetric inequality.
\vskip5pt
\noindent
\textsc{AMS subject classifications.} 82B24, 49J45, 49J10, 49Q10, 49J99.
\end{abstract}

\maketitle
\tableofcontents
%\section{Introduction}
\section{Introduction}
In this paper we introduce and analyze variational models for hard spheres interacting through Riesz type attractive potentials. 
The model consists in minimizing  nonlocal energies of the type
\begin{equation}\label{eneintro}
\sum_{i\neq j} K^p(|x_i-x_j|), 
\end{equation}
%over all configurations of $N$ points ${x_1,\, \ldots \, , x_N}\subset \R^d$ satisfying $|x_i-x_j|\ge 2$ for all $i\neq j$, and eventually considering the thermodynamic limit $N\to +\infty$. The potential $K^p:\R^+ \to (-\infty,0]$ is a power-law attractive potential $K(r) \approx - \frac{1}{r^p}$ for large $r$, with $p \in [0,d+1)$. 
over all configurations of $N$ points $\{x_1,\, \ldots \, , x_N\}\subset \R^d$ satisfying $|x_i-x_j|\ge 2$ for all $i\neq j$; here
$K^p:\R^+ \to (-\infty,0]$ is a power-law attractive potential $K^p(r) \approx - \frac{1}{r^p}$ for large $r$, with $p \in (0,d+1)$. Eventually, we consider the thermodynamic limit $N\to +\infty$. 

% The potential $K^p:\R^+ \to (-\infty,0]$ is defined for all
%$p \in [0,d+1)$ as  $K(r)= - \frac{1}{r^p}$  for $p\in(0,d)$, while it is a suitable regularized version of it for $p \in [d,d+1)$. 

As a consequence of the hard sphere constraint and of the attractive behaviour of the  potential, 
%the densities are bounded from above by the optimal packing density, so that 
the ground states of the system  turn out to be optimal packed configurations of spheres filling a macroscopic set. The thermodynamic limit is described by a nonlocal energy that is a Riesz type continuous counterpart of \eqref{eneintro} for  $p\in(0,d)$; 
remarkably, in the  case $p\in[d,d+1)$
fractional perimeters  arise in the limit energy. In both cases $p\in(0,d)$  and $p\in[d,d+1)$, the optimal asymptotic shape is given (after scaling) by the Euclidean ball, and this is a consequence of the Riesz rearrangement inequality and of the fractional isoperimetric inequality, respectively. These results are obtained by providing a $\Gamma$-convergence expansion of  the energy. The method allows to consider and understand  also slight variants of the basic variational problem \eqref{eneintro}, taking into account also volume forcing terms,   possibly enforcing different optimal shapes.

The combination of the attractive potential together with the hard sphere constraint provides a basic example of long range attractive/short range repulsive  interactions. 
In this respect, the proposed model fits in the class of {\it aggregation} \cite{FHK, BCT,  CC}  and {\it crystallization} \cite{BL} problems, but with a substantial change of perspective due to the fundamental role 
%of the tail of the interaction energy: while in classical crystalization problems it is assumed to be a lower order term, in our model it represents the leading term in the energy functional; in fact we consider also the case of non-integrable tails, corresponding to $p>d$, referred to as {\it unstable potentials} in the crystallization community \cite{BL}. This produces new interesting phenomena: in few words crystallization is replaced by the related but different concept of {\it optimal packing}, while the macroscopic {\it Wulff shape} is disrupted, and replaced by the Euclidean ball shape. 
played  in our model by the tail of the interaction energy.
%often negletted, that represents the leading energy contribution in our variational formulations. 
This is the case for both integrable and  non-integrable tails, 
%corresponding to $p>d$, 
referred to as {\it unstable potentials} in the crystallization community \cite{BL}.
%  This produces new interesting phenomena: in few words crystallization is replaced by the related but different concept of {\it optimal packing}, while the macroscopic {\it Wulff shape} is disrupted, and replaced by the Euclidean ball shape.
This is why in our model   crystallization  is replaced by the related but different concept of {\it optimal packing}, while 
%the macroscopic {\it Wulff shape} is disrupted, and replaced by the Euclidean ball shape. 
the microscopic structure does not affect the macroscopic shape, that turns out to be the Euclidean ball. 

To explain these new phenomena, we first provide an overview of  the classical crystallization problem, focussing on two basic models in two dimensions. They are  based on minimization of an interaction energy as in \eqref{eneintro}, for some potential  $K$ that tends to infinity as $r\to 0$,  has a well at a specific length enforcing crystallization and fixing the lattice spacing (and structure), and rapidly decays to $0$ as $r\to +\infty$. 
The basic potential is provided by the Heitmann-Radin model \cite{HeRa} which consists in systems of  hard spheres 
%of radius $1$ 
whose pair-interaction energy is $+\infty$ if two balls overlap, it is equal to $-1$ if the balls touch each other, and $0$ otherwise. In two dimensions, and for fixed number $N$ of discs, minimizers exhibit crystalline order: the centers of the discs  lie on a subset of an  equilateral triangular lattice.  Moreover, for large $N$ the discs fit a large hexagon. The first phenomenon is referred to as {\it crystallization}, the second as {\it macroscopic Wulff-shape}. Crystallization is due to local optimization of the potential around its well: almost each particle tends to maximize the number of nearest neighbor particles. In view of the hard disc constraint, such a number is $6$.  The macroscopic Wulff shape is the result of the minimization of the number of {\it boundary} particles that have the wrong number of nearest neighbors. In this respect, the macroscopic shape minimizes an anisotropic perimeter energy; under a volume constraint, this is nothing but   the anisotropic isoperimetric problem, whose minimizer is the Wulff shape \cite{FM}. Recently, these phenomena have been analyzed in details in the solid formalism of $\Gamma$-convergence \cite{AFS, DNP}. 

A less rigid and most popular model in elasticity is given by the polynomial  Lennard-Jones type potential; the hard sphere constraint is replaced by a repulsive term which is infinite only at $0$; the only negative value in the Heitmann-Radin potential is replaced by a narrow well, while the zero-long range interaction of the Heitmann-Radin potential is replaced by a rapidly decaying tail energy. In \cite{T} it is proved that, if the well of the potential is very narrow, and the tail is a small enough lower order term, then the crystallization property is preserved in average, while the Wulff shape problem is still open. Recently, it has been proved \cite{BDP}
that  a slightly wider well in the potential favours the square lattice rather than the triangular one.  In higher dimensions the picture is much less clear. 

We pass to describe our model; since the tail energy will be predominant, it is convenient to change length-scale, introducing a parameter $\e>0$, whose inverse $\frac{1}{\e}$ represents the size of the body filled by the hard spheres. Then, in order to deal, in the thermodynamic limit, with a finite macroscopic body, we scale the spheres with $\e$.
% the corresponding empirical measures are then finite sums  of Dirac masses $\mu_\e=\sum_{i=1}^{N_\e} \delta_{x_{i,\e}}$,
%where $x_i$ represent the centers of $N_\e\in\N$ disjoint spheres of radius $\e$. 
%For what concerns the attactive interactions, as already specified we consider Riesz type power-law 
%potentials, and we cosider both integrable and non integrable powers. 
After this scaling the potential $K^p$ becomes integrable if and only if $p\in (0,d)$. We discuss first the integrable case: we write $p= d+\sigma$ for some $\sigma\in (-d,0)$, and we introduce the corresponding potential which, up to a prefactor, becomes the function   $f_{\varepsilon}^{\sigma}:\R^+\to \overline \R$ defined  by 
\begin{equation}\label{intepo}
f_{\varepsilon}^{\sigma}(r):=
\left\{
\begin{aligned}
& +\infty & \text{ for } r \in [0,2 \varepsilon) \, , \\
& -\frac{1}{r^{d+\sigma}} & \text{ for }  r \in [2 \varepsilon,\infty) \, . 
\end{aligned}
%\qquad \text{ if }  \sigma \in (-d,0),
\right.
\end{equation}
In this case the $\Gamma$-limit as $\e\to 0$ of the discrete energy \eqref{eneintro}, with $K^p=f_{\varepsilon}^{\sigma}$,  is nothing but its continuous counterpart, defined on absolutely continuous measures, whose density is bounded from above by the density of the optimal packing problem in $\R^d$ (see Theorem \ref{Gammaconvprimocaso}). 
This $\Gamma$-convergence result can be completed with suitable confining  volume forcing terms, ensuring compactness properties for minimizers. We prove that minimizers consist, in the limit as $\e\to 0$, in optimal packed configurations of balls filling a macroscopic set $E$, which is a ball whenever the volume term is radial. 

The non-integrable case is much more involved.
In this case both the tail and the core of the energy blow up as $\e\to 0$, the first being the leading term.  In order to provide a first order expansion of the energy in terms of $\Gamma$-convergence, we need to regularize the potential, neglecting the core energy. More precisely,  
we introduce a mesoscopic length-scale $r_\e \gg \e$ with $r_\e\to 0$   as $\e\to 0$ (see \eqref{condr}), and we regularize 
the potential  cutting-off  all short range interactions at   scales smaller than $r_\e$. The corresponding regularized Riesz type $p$-power-law 
potentials, with $p=d+\sigma$ and $\sigma\in[0,1)$,   
are  defined by
\begin{equation}
f_{\varepsilon}^{\sigma}(r):=
\begin{cases}
+\infty   & \text{ for } r \in [0,2 \varepsilon) \, , \\
0 &  \text{ for } r \in [2 \varepsilon,r_{\varepsilon})\, , \\
-\frac{1}{r^{d+\sigma}} &  \text{ for } r \in [r_{\varepsilon}, +\infty) \, .
\end{cases}
\end{equation} 

Then, only the tail of the interaction energy remains, and the microscopic details of the potential are neglected in the limit as $\e\to 0$.
This is consistent with the integrable case \eqref{intepo}, where the core contribution vanishes as a consequence of the only integrability of the potential.
%The energy in this non integrable cases diverges as $\e\to 0$; d
Dividing the energy by the diverging tail contribution, we  obtain the zero order term in the  $\Gamma$-convergence expansion of the energy. This zero order $\Gamma$-limit   still enforces optimal packing on minimizing sequences, but does not determine the macroscopic limit shape. Then, we   look at the next term in the $\Gamma$-convergence expansion. This consists in removing from the total energy the infinite volume-term energy per particle, so that a finite quantity reamins, which turns out to detect the macroscopic shape. In fact, the first order $\Gamma$-limit, provided in Theorem \ref{gacofinal}, is nothing but the $\sigma$-fractional perimeter, introduced in \cite{CRS} for $\sigma\in (0,1)$, and recently  in \cite{DeLucaNovPons} also for $\sigma=0$. Such an analysis has first been provided in a continuous setting in \cite{DeLucaNovPons}; our results represent its discrete counterpart. 
Since fractional perimeters are minimized, under a volume constraint, by Euclidean balls, we deduce that, as $\e\to 0$, minimizers are given by optimal packed configurations of $\e$-spheres filling a macroscopic ball. 

The range of the parameter $\sigma\in(-d,1)$ is somehow natural, since for $\sigma < -d$ the potential becomes repulsive (and constant for $d=0$); the case $\sigma=1$ formally corresponds to the Euclidean perimeter, being the limit of $s$-fractional perimeters as $s\to 1$ \cite{ADM, CN}. The case $\sigma>1$ is unclear to us.

Our analysis represents a first attempt to provide a discrete microscopic description for a variety of continuous models characterized by nonlocal interactions. Among them we mention the Poincar\'e problem, which, for $p=1$, establishes that the ball is the optimal shape  minimizing the potential energy of a fluid \cite{Lie}. 
Moreover, the hard sphere constraint could be relaxed, providing in the limit richer models, accounting for density penalization terms,
as in the {\it rotating stars problem} \cite{Lio}, as well as several  attracting-repulsive potentials \cite{BCT}. Our analysis could also be extended to 
discrete systems describing interactions between different populations, \cite{CPT,CDNP}.

Finally, our analysis suggests the possible role of the tail energy as a new mechanism enforcing optimal packing, and hence, in some respect, crystallization. 

\subsection{Notation of the paper}
In this paper we use the following notation:  $\omega_d$ denotes the Lebesgue measure of the unit ball $B_1(0)$ of $\mathbb{R}^{d}$. 
% $\mathcal{B}(\mathbb{R}^d)$ denotes the family of Borel subsets of $ \mathbb{R}^d$. 
$\mathcal{M}(\mathbb{R}^d)$ denotes the family  of Lebesgue measurable sets  $E\subset \mathbb{R}^d$, while  the corresponding Lebesgue measure will be denoted by  $|E|$. 
We set $\mathcal{M}_{f}(\mathbb{R}^d):=\{E \in \mathcal M (\mathbb{R}^d): \, |E|<+\infty\}$.  $\mathcal{M}_{b}(\mathbb{R}^{d})$ denotes the space of (non negative) finite Radon measures in $\mathbb{R}^{d}$. The Dirac delta measure centered in $x$ is denoted by $\delta_{x} $, while the Lebesgue measure by $\mathcal L^d$.  We denote with $C(\star,\cdots,\star)$ a constant that depends on $\star,\cdots,\star$; this constant can change in the steps of a proof. Finally, $\overline{\mathbb{R}}:=\mathbb{R}\cup \{-\infty,+\infty\}$.

\section{Hard spheres, optimal packing ahd empirical measures}

Here we introduce the admissible configurations of the variational model proposed in this paper, and revisit some concepts on optimal packed configurations we will need in our analysis.

\subsection{Density of optimal   packing}
\begin{definition}\label{def.T^d e C^d}
	We denote by  $\mathrm{Ad^d}$ be the class of sets  $X\subset \R^d$ such that $|x_i-x_j| \ge 2$  for all $x_i,\, x_j \in X$ with  $x_i\neq x_j$. 
	The volume density of optimal ball  packings in $\mathbb{R}^{d}$ is the constant  $C^d$  defined by 
		\begin{equation} \label{C^d0}
	C^d:= \sup_{X\in   \mathrm{Ad^d}} \limsup_{r \rightarrow + \infty} \frac{\#(X \cap r Q ) \omega_d}{\vert r Q \vert} \, , 
	\end{equation}
	where $ Q:= [0,1)^d$.
	Moreover, we say that $\mathrm{T}^{d}\subset \mathbb{R}^{d}$ is an optimal configuration for the (centers for  the unit ball) optimal packing problem
	if $\mathrm{T}^{d}\in \mathrm{Ad^d}$
	and
	\begin{equation} \label{C^d}
	\lim_{r \rightarrow + \infty} \frac{\#(\mathrm{T}^{d} \cap  r Q) \omega_d}{\vert r Q \vert} = C^d.
	\end{equation}
	\end{definition} 

In \cite{Groemer} it is proved the existence of an optimal configuration, and that in defining $C^d$ and $\mathrm{T}^{d}$, $Q$ can be replaced by 
any open bounded set $A \subset \mathbb{R}^d$ with $A\neq \emptyset$.  

Now we want to  provide  a rate of convergence in \eqref{C^d0}. To this purpose, for every $r>0$ 
let $\mathrm{Ad^d}(r Q)$ be the class of sets  $X\subset r Q$ such that $|x_i-x_j| \ge 2$  for all $x_i,\, x_j \in X$ with  $x_i\neq x_j$, and set
\begin{equation}\label{Crd}
C_r^d:= \sup_{X\in   \mathrm{Ad^d}(rQ)}  \frac{\#(X) \omega_d}{r^d} \, .
\end{equation}
It is easy to see that for all $r>0$ there exists a maximizer, denoted by ${T}^{d}_r$. 
\begin{lemma}\label{sviluppoasintoticodensità}
	There exists $C(d)>0$ such that $C^d \le C^d_{r} \le C^d +  \frac{C(d)}{r}$  for all  $r>0$.
\end{lemma}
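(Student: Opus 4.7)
The plan is to prove the two inequalities separately by comparing the finite packing on $rQ$ with the asymptotic packing on $\R^d$.

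For the lower bound $C^d\le C^d_r$ I would use the optimal infinite configuration $T^d$ from \eqref{C^d} together with a Fubini--type averaging. For every shift $y\in\R^d$ the set $(T^d-y)\cap rQ$ belongs to $\mathrm{Ad^d}(rQ)$, hence $\omega_d\#((T^d-y)\cap rQ)/r^d\le C^d_r$. Averaging $\#((T^d-y)\cap rQ)=\#(T^d\cap(y+rQ))$ in $y$ over a large cube $[0,L)^d$ and swapping sum and integral shows that this mean equals $r^d\#(T^d\cap A_L)/L^d$, where $A_L$ differs from $[0,L)^d$ only in a boundary shell of width $r$. Letting $L\to\infty$ and invoking \eqref{C^d} forces the mean to tend to $r^d C^d/\omega_d$, so at least one shift $y$ realises that value and produces $C^d_r\ge C^d$.

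For the upper bound I would turn the optimal finite packing $T^d_r$ into a periodic admissible configuration on all of $\R^d$: tile $\R^d$ by disjoint cubes of side $r+2$ and, inside each tile, place a rigid translate of $T^d_r$ seated in a central sub-cube of side $r$ at distance $1$ from every face. The unit-wide buffer keeps points belonging to adjacent tiles at distance at least $2$, so the resulting union $X$ lies in $\mathrm{Ad^d}$. Since $X$ is periodic with $\#(T^d_r)$ points per fundamental cell of side $r+2$, its asymptotic density in the sense of \eqref{C^d0} is $\#(T^d_r)\omega_d/(r+2)^d$ (using the cube--invariance remark following Definition \ref{def.T^d e C^d}), and this is bounded above by $C^d$. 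Rearranging gives
$$C^d_r=\frac{\#(T^d_r)\omega_d}{r^d}\le C^d\Big(1+\frac{2}{r}\Big)^d,$$
and the elementary expansion $(1+2/r)^d=1+2d/r+\mathrm{O}(1/r^2)$ yields the desired estimate $C^d_r\le C^d+C(d)/r$ for $r\ge 1$; the regime $r\in(0,1)$ can be absorbed into the constant $C(d)$, as the number of $2$-separated points fitting inside $rQ$ is uniformly bounded on that range.

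The step I expect to be most delicate is the tiling construction: the buffer width must be chosen exactly so that admissibility across cell boundaries is automatic, and I have to verify, via the cube--invariance of \eqref{C^d0} recalled from \cite{Groemer}, that the asymptotic density of the periodic configuration really is $\#(T^d_r)\omega_d/(r+2)^d$ in the sense of Definition \ref{def.T^d e C^d}. Once this is in place, the remaining work is just the binomial expansion.
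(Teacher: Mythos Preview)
Your approach is sound and takes a genuinely different route from the paper's. You prove $C^d\le C^d_r$ by averaging shifts of the optimal infinite configuration $T^d$, and the upper bound by periodizing the finite maximizer $T^d_r$ with a unit buffer, obtaining the clean inequality $C^d_r\le C^d(1+2/r)^d$. The paper instead handles both inequalities with a single dyadic scheme: splitting $2rQ$ into $2^d$ translates of $rQ$ and removing a boundary layer of width $1$ yields $C^d_{2r}\le C^d_r\le C^d_{2r}+c(d)/r$, which is iterated along $2^n r$ and sent to the limit. Your tiling argument is more direct and produces an explicit constant, while the paper's iteration does not invoke the existence of an optimal $T^d$ from \cite{Groemer} but must still identify $\lim_n C^d_{2^n r}$ with $C^d$.

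One caveat, which concerns the statement rather than your strategy: the upper bound cannot hold for \emph{all} $r>0$ when $d\ge 2$. For $r<2/\sqrt{d}$ the cube $rQ$ admits exactly one admissible point, so $C^d_r=\omega_d r^{-d}$, which is not dominated by $C^d+C(d)/r$ for any fixed $C(d)$. Hence your sentence that the range $r\in(0,1)$ ``can be absorbed into the constant'' is not correct as written; boundedness of $\#(T^d_r)$ does not help because of the normalization by $r^d$. The inequality is valid for $r\ge 1$, where both your argument and the paper's go through, and this is the only regime actually used later (in Proposition~\ref{stafond}).
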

\begin{proof}
For every $r>0$ we have  $2rQ=\cup_{i=1}^{2^d}rQ_i$ where $Q_i=Q+v_i$, $v_i \in \{0,1\}^d$. 
Let $T_r^d$ be any maximizer of \eqref{Crd}, and set 
$$
\hat T_r^d:= \{x\in T_r^d : \, \text{dist } (x, r \partial  Q) \ge 1\},
\qquad
\tilde T_{2r}^d:= \cup_{i=1}^{2^d} \hat T_r^d + v_i, \, v_i \in \{0,1\}^d. 
$$
It is easy to see that there exists a constant $c(d)$ such that $\#  T_r^d - \# \hat T_r^d \le c(d) r^{d-1}$. 
Moreover, 
$$
\max\{\# T_{2r}^d \cap r Q_i, \, i=1,\cdots,2^d\} \ge \frac{ \# T_{2r}^d}{2^d} \, .
$$
Then we have
%$$
%r^d C^d_{r} \ge \frac{ \# T_{2r}^d}{2^d}  = r^d C^d_{2r} \ge \frac{\# \tilde T_{2r}^d}{2^d} =   \# \hat T_r^d \ge   \#  T_r^d -  c(d)  r^{d-1} = r^d C^d_{r} -  c(d)  r^{d-1}
%$$
$$
r^d C^d_{2r} = \frac{ \# T_{2r}^d}{2^d}  \le r^d C^d_{r} = \#  T_r^d \le     \frac{\# \tilde T_{2r}^d}{2^d} + c(d)  r^{d-1} \le r^d C^d_{2r} + c(d)  r^{d-1}.
$$
%In particular, $C^d_{2r} \le C^d_{r} \le C^d_{2r} + \frac{c(d)}{r}$.  
Therefore, for every $r>0$, $n\in\N$ we have
$$
C^d_{2^n r} \le C^d_{2^{n-1} r} \le C^d_{2^nr} + \frac{c(d)}{2^{n-1}r}, 
$$
which by iteration over $n$ yields
$$
C^d_{2^n r} \le C^d_{r} , \qquad  C^d_r \le C^d_{2^nr} + \sum_{k=1}^{n} \frac{c(d)}{2^{k-1}r}.
$$
Sending $n\to +\infty$ we deduce the claim.  
\end{proof}

\subsection{The empirical measures} 
We introduce the family of empirical measures 
\begin{equation*}
\mathcal{EM}:= \biggl\{  \sum_{i=1}^{N} \delta_{x_i}: x_{i} \neq x_{j} \; \text{for} \; i \neq j, N\in \N \biggr\} \subset \mathcal{M}_{b}(\mathbb{R}^{d}).
\end{equation*}
We consider  the space $\mathcal{M}_{b}(\mathbb{R}^d)$ endowed with the tight topology.
\begin{definition}[Tight convergence]
We say that a sequence $\{\mu_{\varepsilon}\}_{\varepsilon \in (0,1)}  \subset \mathcal{M}_{b}(\mathbb{R}^d)$  tightly converges to $\mu \in \mathcal{M}_{b}(\mathbb{R}^d)$ if $\mu_{\varepsilon} \weakstar \mu $ and $ \mu_{\varepsilon}(\mathbb{R}^d) \rightarrow \mu(\mathbb{R}^d)$, as $\varepsilon \rightarrow 0^+$.
\end{definition}
\begin{definition}
	Let $\varepsilon>0$, we define the set $\mathcal{EM_\e}\subset \mathcal{EM}$ as 
	$$
	\mathcal{EM_\e} := \biggl\{\mu \in \mathcal{EM}: \,  \mu= \sum_{i=1}^{N} \delta_{x_i} \text{ with } \vert x_i-x_j \vert \geq 2 \varepsilon \text{ for all } i \neq j\biggr\}.
$$
\end{definition}

\begin{lemma}\label{dime}
	Let $\{ \mu_{\varepsilon} \}_{\varepsilon \in (0,1)} \subset \mathcal{EM}$ with $\mu_\e\in \mathcal{EM}_\e $ for all $\e\in(0,1)$ be such that  $\frac{\varepsilon^d \omega_d}{C^d} \mu_{\varepsilon} \weakstar \mu $ for some 
	$\mu \in \mathcal{M}_{b}(\mathbb{R}^d)$, as $ \varepsilon \rightarrow 0^+$ (where $C^d$ is defined in \eqref{C^d0}).  Then, there  exists $\rho \in \mathrm{L}^{\infty}(\mathbb{R}^d, [0,1])$ such that $\mu=\rho \mathcal{L}^d$.
\end{lemma}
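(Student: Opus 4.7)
The plan is to show that $\mu$ is absolutely continuous with density at most $1$ by bounding the $\mu_\varepsilon$-mass of arbitrary open cubes uniformly and then passing to the limit using the lower semicontinuity of mass on open sets under weak$^*$ convergence.

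First I would fix an open cube $Q_\rho(x)$ of side $\rho$ and use the hard sphere condition together with the optimal packing estimate of Lemma \ref{sviluppoasintoticodensità}. Writing $\mu_\varepsilon=\sum_{i=1}^{N_\varepsilon}\delta_{x_i^\varepsilon}$ with $|x_i^\varepsilon-x_j^\varepsilon|\ge 2\varepsilon$, the rescaling $y_i=x_i^\varepsilon/\varepsilon$ sends the points of $\mu_\varepsilon$ contained in $Q_\rho(x)$ to a configuration in a cube of side $\rho/\varepsilon$ with mutual distances at least $2$. By definition of $C^d_{\rho/\varepsilon}$ in \eqref{Crd} this gives
\begin{equation*}
\mu_\varepsilon(Q_\rho(x))\;\le\; \frac{C^d_{\rho/\varepsilon}}{\omega_d}\Bigl(\frac{\rho}{\varepsilon}\Bigr)^d,
\qquad\text{so}\qquad
\frac{\varepsilon^d\omega_d}{C^d}\,\mu_\varepsilon(Q_\rho(x))\;\le\; \frac{C^d_{\rho/\varepsilon}}{C^d}\,\rho^d.
\end{equation*}

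Next I would use Lemma \ref{sviluppoasintoticodensità}, which yields $C^d_{\rho/\varepsilon}/C^d\to 1$ as $\varepsilon\to 0^+$, to conclude
\begin{equation*}
\limsup_{\varepsilon\to 0^+}\frac{\varepsilon^d\omega_d}{C^d}\,\mu_\varepsilon(Q_\rho(x))\;\le\;\rho^d\;=\;\mathcal L^d(Q_\rho(x)).
\end{equation*}
Combining this with the standard lower semicontinuity of mass on open sets under weak$^*$ convergence of Radon measures,
\begin{equation*}
\mu(Q_\rho(x))\;\le\;\liminf_{\varepsilon\to 0^+}\frac{\varepsilon^d\omega_d}{C^d}\,\mu_\varepsilon(Q_\rho(x))\;\le\;\mathcal L^d(Q_\rho(x)),
\end{equation*}
for every open cube $Q_\rho(x)\subset \mathbb R^d$.

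Finally I would upgrade the cube-wise inequality $\mu(Q_\rho(x))\le\mathcal L^d(Q_\rho(x))$ to the measure inequality $\mu\le\mathcal L^d$ on every Borel set: outer regularity of the Radon measure $\mu$ lets one approximate any Borel $B$ by open supersets, and any open set can be covered by a countable family of cubes whose total volume approximates the Lebesgue measure arbitrarily well. Alternatively, the same bound applied to cubes shrinking around a point shows that the upper derivative of $\mu$ with respect to $\mathcal L^d$ is everywhere at most $1$, hence $\mu\ll\mathcal L^d$ by the Lebesgue differentiation theorem for Radon measures, and its Radon--Nikodym density $\rho$ satisfies $0\le\rho\le 1$ a.e., i.e.\ $\rho\in L^\infty(\mathbb R^d,[0,1])$.

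The only delicate point is the uniform packing estimate used in the first step: one must apply Lemma \ref{sviluppoasintoticodensità} to $r=\rho/\varepsilon\to\infty$ so that the error term $C(d)/r$ vanishes, which is why the conclusion is an asymptotic bound rather than an exact one for each fixed $\varepsilon$. The passage from a cube-wise upper bound to an $L^\infty$ density is then essentially routine measure theory.
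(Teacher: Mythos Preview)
Your proof is correct and follows essentially the same strategy as the paper's: bound the mass of the rescaled empirical measures on open sets via the optimal packing density, use lower semicontinuity under weak$^*$ convergence, and conclude $\mu\le\mathcal L^d$. The only real difference is in how the packing bound is obtained. The paper works directly with an arbitrary open set $A$ and invokes Groemer's result that the optimal packing density can be computed replacing the cube $Q$ by any open bounded set, so that $\mu(A)\le|A|$ falls out in one line. You instead restrict to cubes, apply the paper's own Lemma~\ref{sviluppoasintoticodensità} (which gives the explicit rate $C^d_{\rho/\varepsilon}\le C^d+C(d)\varepsilon/\rho$), and then upgrade the cube-wise bound to all Borel sets by outer regularity or differentiation. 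Your route is slightly longer but more self-contained, as it avoids the external reference to \cite{Groemer}; the paper's route is shorter but leans on that citation. Both are fine.
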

\begin{proof} It  is sufficient to prove that $\mu(A) \leq \vert A \vert $ for all open set $A$. By the lower semi-continuity of the total variation with respect to weak-star convergence, we have 
\begin{multline*}
\mu(A) \leq \liminf_{\varepsilon \rightarrow 0^+} \frac{\varepsilon^d \omega_d}{C^d} \mu_\varepsilon(A) 
= \liminf_{\varepsilon \rightarrow 0^+} \frac{\vert A \vert}{C^d} \frac{\omega_d \# \{  A \cap \supp(\mu_{\varepsilon})\}}{\vert \frac{A}{\varepsilon} \vert} \\
\leq \vert A \vert \lim_{\varepsilon \rightarrow 0^+} \frac{1}{C^d} \frac{\# (\mathrm{T}^d \cap \frac{A}{\varepsilon})}{\vert \frac{A}{\varepsilon} \vert}= \vert A \vert,
\end{multline*}
where the last inequality follows by \eqref{C^d0} and \eqref{C^d} with $Q$ replaced by $A$. 
\end{proof}

\begin{lemma} \label{LemmacostruzF}
For every $\rho \in \mathrm{L}^{1}(\mathbb{R}^d, [0,1]) $ there exists a sequence $\{ \mu_{\varepsilon} \}_{\varepsilon \in (0,1)} \subset \mathcal{EM}$ with $\mu_\e\in \mathcal{EM}_\e $ for all $\e\in(0,1)$ such that  $\frac{\varepsilon^d \omega_d}{C^d} \mu_\varepsilon \rightarrow \rho \mathcal{L}^d$ tightly in $\mathcal{M}_{b}(\mathbb{R}^d)$. 	
\end{lemma}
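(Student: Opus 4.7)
The strategy is a recovery-sequence construction. First, by a density and diagonal argument I reduce to the case where $\rho$ is piecewise constant on finitely many disjoint cubes. Second, inside each such cube, I place a translate of an $\varepsilon$-rescaled optimal packing of appropriate size, distributing the mass uniformly at a mesoscopic scale $\delta_\varepsilon$ with $\varepsilon\ll\delta_\varepsilon\ll 1$.

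\textit{Step 1 (reduction).} Dyadic conditional expectations $\rho_n(x):=2^{nd}\int_{Q(x,n)}\rho\,\mathrm d\mathcal L^d$, where $Q(x,n)$ is the dyadic cube of side $2^{-n}$ containing $x$, restricted to a cube $[-n,n]^d$, yield simple functions $\rho_n=\sum_{k=1}^{K_n}a_{n,k}\chi_{Q_{n,k}}$ with $a_{n,k}\in[0,1]$, disjoint closed cubes $Q_{n,k}$, and $\rho_n\to\rho$ in $L^1(\R^d)$. Since $L^1$-convergence of densities implies tight convergence of the associated absolutely continuous measures, and tight convergence on bounded subsets of $\mathcal M_b(\R^d)$ is metrizable, a standard diagonal argument reduces the lemma to the case where $\rho$ itself has this simple form.

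\textit{Step 2 (construction).} Fix $\rho=\sum_{k=1}^{K}a_k\chi_{Q_k}$ as above and set $\delta_\varepsilon:=\sqrt\varepsilon$. For each $k$, tile the portion of $Q_k$ at distance $\ge\delta_\varepsilon$ from $\partial Q_k$ by disjoint closed cubes $Q_{k,j}^\varepsilon$ of side $\delta_\varepsilon$, $j=1,\dots,J_k^\varepsilon$ (the discarded boundary strip has measure $O(\delta_\varepsilon)\to 0$). Inside each $Q_{k,j}^\varepsilon$ pick a concentric sub-cube $\tilde Q_{k,j}^\varepsilon$ of side $s_{k,\varepsilon}:=a_k^{1/d}(\delta_\varepsilon-2\varepsilon)$ and place a translate of $\varepsilon T^d_{r_{k,\varepsilon}}$ inside $\tilde Q_{k,j}^\varepsilon$, where $r_{k,\varepsilon}:=s_{k,\varepsilon}/\varepsilon\to+\infty$. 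Let $\mu_\varepsilon$ be the empirical measure supported on the union of these translated packings (defined only for $\varepsilon$ so small that $s_{k,\varepsilon}>0$).

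\textit{Step 3 (admissibility and convergence).} Two points in the same $\tilde Q_{k,j}^\varepsilon$ lie at distance $\ge 2\varepsilon$ by the definition of $T^d_{r_{k,\varepsilon}}$; between distinct sub-cubes the $\varepsilon$-buffer inside each $Q_{k,j}^\varepsilon$ (and, across different $Q_k$, the $\delta_\varepsilon$ boundary strip) enforces the same constraint. Hence $\mu_\varepsilon\in\mathcal{EM}_\varepsilon$. By Lemma \ref{sviluppoasintoticodensità} the mass carried by each $\tilde Q_{k,j}^\varepsilon$, after the prefactor $\frac{\varepsilon^d\omega_d}{C^d}$, equals
$$
\frac{\varepsilon^d\omega_d}{C^d}\cdot\frac{r_{k,\varepsilon}^d\, C^d_{r_{k,\varepsilon}}}{\omega_d}=\frac{s_{k,\varepsilon}^d\, C^d_{r_{k,\varepsilon}}}{C^d}=a_k\delta_\varepsilon^d\,(1+o(1))
$$
uniformly in $j,k$, using $(1-2\varepsilon/\delta_\varepsilon)^d\to 1$ and $C^d_{r_{k,\varepsilon}}/C^d=1+O(\varepsilon/\delta_\varepsilon)$. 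For any $\varphi\in C_b(\R^d)$, summing the contributions over $j$ produces a Riemann sum of mesh $\delta_\varepsilon$ for $\int\rho\varphi\,\mathrm d\mathcal L^d$, which converges to the desired integral; the choice $\varphi\equiv 1$ on a ball containing $\bigcup_k Q_k$ and vanishing outside a slightly larger ball delivers the total-mass convergence and hence tightness.

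\textit{Main obstacle.} The genuine work is calibrating the two scales: $\delta_\varepsilon$ must shrink so that continuous test functions are well approximated at that scale and the boundary loss inside each $Q_k$ is negligible, but $\delta_\varepsilon\gg\varepsilon$ so that the quantitative estimate of Lemma \ref{sviluppoasintoticodensità} gives $C^d_{r_{k,\varepsilon}}\to C^d$ and so that the $\varepsilon$-buffer inside each small cube costs negligible volume. Any choice with $\varepsilon\ll\delta_\varepsilon\ll 1$ works; $\delta_\varepsilon=\sqrt\varepsilon$ is the simplest.
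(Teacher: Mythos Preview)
Your proof is correct but takes a more elaborate route than the paper's. The paper also reduces by density, but only to the case $\rho = a\chi_A$ for a single constant $a\in(0,1)$ and a single open set $A$; it then observes that the one-line construction
\[
\mu_\varepsilon := \sum_{x\,\in\,\varepsilon a^{-1/d}\,\mathrm{T}^d\,\cap\,A}\delta_x
\]
already does the job: dilating the global optimal configuration $\mathrm{T}^d$ by the factor $a^{-1/d}\ge 1$ preserves the hard-sphere constraint while producing asymptotic density exactly $a$, by the Groemer result that $C^d$ may be computed with any open set in place of $Q$. No mesoscopic scale $\delta_\varepsilon$, no tiling, and no appeal to the quantitative Lemma~\ref{sviluppoasintoticodensità} is needed. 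Your two-scale construction using the cube maximizers $T^d_r$ together with the rate $C^d_r=C^d+O(1/r)$ is entirely sound, and has the mild virtue of being self-contained (you only pack cubes, so you never invoke the extension of the packing density to arbitrary open sets), but it costs you a substantially longer argument for what is, at bottom, the same phenomenon: an optimal packing, suitably dilated and restricted, recovers any sub-maximal density.
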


\begin{proof}
By a standard density argument, it is enough to prove the claim for $\rho= a \chi_A$ for some $a \in (0,1)$ and some open set $A\subset \R^d$.
Let $\mu_{\varepsilon}:= \sum_{i \in I_{\varepsilon}} \delta_{x_i}$ where $I_{\varepsilon}:= {\varepsilon}{a^\frac{-1}{d}} \mathrm{T}^d \cap A$.
Then, it is easy to check that $\frac{\varepsilon^d \omega_d}{C^d} \mu_\varepsilon \rightarrow a \chi_A \mathcal{L}^d$ tightly in $\mathcal{M}_{b}(\mathbb{R}^d)$. 
\end{proof}

For all $\mu:= \sum_{i=1}^{N}\delta_{x_i}$  in $\mathcal{EM}_{\varepsilon}$ we set 
\begin{equation}\label{defhm}
\hat{\mu}:=\frac{1}{C^d}\sum_{i=1}^{N}\chi_{B_\varepsilon(x_i)}.
\end{equation}

\begin{lemma}\label{lediti}
Let $\{\mu_{\varepsilon}\}_{\varepsilon \in (0,1)} \subset \mathcal{EM}$ with $\mu_{\varepsilon} \in \mathcal{EM}_{\varepsilon}$ for all $\e\in (0,1)$,  and let 
$\rho \in \mathrm{L}^1(\mathbb{R}^d,[0,1])$ be such that $\frac{\varepsilon^d \omega_d}{C^d}\mu_{\varepsilon} \rightarrow \rho \mathcal{L}^d$ tightly in $\mathcal{M}_{b}(\mathbb{R}^d)$. Then,
$\hat{\mu}_{\varepsilon} \rightarrow \rho \mathcal{L}^d$ tightly.
\end{lemma}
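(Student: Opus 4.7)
The plan is to realize $\hat\mu_\varepsilon$ as a mollification of the measure $\nu_\varepsilon:=\frac{\varepsilon^d\omega_d}{C^d}\mu_\varepsilon$, which by hypothesis converges tightly to $\rho\mathcal L^d$. Setting $\eta_\varepsilon:=\frac{1}{\varepsilon^d\omega_d}\chi_{B_\varepsilon(0)}$, a direct computation from \eqref{defhm} gives, for every Borel set $A\subset\mathbb R^d$,
\begin{equation*}
\hat\mu_\varepsilon(A)=\frac{1}{C^d}\sum_i|A\cap B_\varepsilon(x_i)|=\int_A(\nu_\varepsilon*\eta_\varepsilon)(y)\,dy,
\end{equation*}
so that $\hat\mu_\varepsilon=(\nu_\varepsilon*\eta_\varepsilon)\mathcal L^d$ as measures. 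From here, the result reduces to the standard principle that convolving a weak-star convergent sequence of measures with a shrinking approximate identity preserves the limit.

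For the total mass, $\int_{\mathbb R^d}\eta_\varepsilon\,dy=1$ together with Fubini yields $\hat\mu_\varepsilon(\mathbb R^d)=\nu_\varepsilon(\mathbb R^d)\to\int_{\mathbb R^d}\rho\,dy$, by the assumed tight convergence of $\nu_\varepsilon$. For the weak-star convergence I would test against an arbitrary $\varphi\in C_c(\mathbb R^d)$. Since $\eta_\varepsilon$ is radially symmetric, Fubini gives
\begin{equation*}
\int\varphi\,d\hat\mu_\varepsilon-\int\varphi\,d\nu_\varepsilon=\int(\varphi*\eta_\varepsilon-\varphi)\,d\nu_\varepsilon,
\end{equation*}
whose modulus is controlled by $\|\varphi*\eta_\varepsilon-\varphi\|_\infty\,\nu_\varepsilon(\mathbb R^d)$. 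Uniform continuity of $\varphi$ drives the first factor to zero, while the second stays bounded by the mass convergence above; combined with $\int\varphi\,d\nu_\varepsilon\to\int\varphi\rho\,dy$, this yields $\int\varphi\,d\hat\mu_\varepsilon\to\int\varphi\rho\,dy$, hence weak-star convergence. Together with the mass convergence, tight convergence follows by definition.

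I do not anticipate any substantial obstacle. Once the identity $\hat\mu_\varepsilon=\nu_\varepsilon*\eta_\varepsilon$ is spotted, the argument is the textbook behavior of mollifications; the prefactor $\varepsilon^d\omega_d/C^d$ in the statement is precisely what makes $\eta_\varepsilon$ a probability density, which is the single quantitative input the proof uses.
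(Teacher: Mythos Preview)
Your proof is correct and follows essentially the same approach as the paper: both compare $\hat\mu_\varepsilon(\varphi)$ with $\frac{\varepsilon^d\omega_d}{C^d}\mu_\varepsilon(\varphi)$ and bound the difference by the oscillation of $\varphi$ on balls of radius $\varepsilon$ times the (bounded) total mass. Your convolution identity $\hat\mu_\varepsilon=\nu_\varepsilon*\eta_\varepsilon$ packages this more cleanly and lets you work directly with $\varphi\in C_c$ via uniform continuity, whereas the paper restricts to $\varphi\in C^1_c$ and passes through an unnecessary subsequence extraction; but the substance of the estimate is identical.
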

\begin{proof}
We observe that
\begin{equation}\label{bouhm}
\lim_{\varepsilon \rightarrow 0^+}\hat{\mu}_{\varepsilon}(\mathbb{R}^d)= \lim_{\varepsilon\rightarrow 0^+}\frac{\varepsilon^d \omega_d}{C^d} \mu_{\varepsilon}(\mathbb{R}^d)= \int_{\mathbb{R}^d}\rho(x) dx.
\end{equation} 
Therefore, up to a subsequence $\hat{\mu}_{\varepsilon} \weakstar g$ for some $g\in \mathcal{M}_{b}(\mathbb{R}^d)$. We have to prove that $g=\rho \mathcal{L}^d$. 
To this purpose,
notice that for all $\varphi \in \mathcal{C}^1_{c}(\mathbb{R}^d)$ we have
\begin{equation*}
\begin{split}
\vert \hat{\mu}_{\varepsilon}(\varphi)-\rho\mathcal{L}^d(\varphi)\vert \leq  \bigg| \hat{\mu}_{\varepsilon}(\varphi)-\frac{\varepsilon^d \omega_d}{C^d} \mu_{\varepsilon}(\varphi) \bigg| + 
 \bigg| \frac{\varepsilon^d \omega_d}{C^d} \mu_{\varepsilon}(\varphi)- \rho \mathcal{L}^d(\varphi)\bigg| 
\\
= \bigg| \sum_{x\in\spt \mu_\e} \frac{1}{C^d}\int_{B_\e(x)} \f(y) -\f(x) \, dy  \bigg| + 
 \bigg| \frac{\varepsilon^d \omega_d}{C^d} \mu_{\varepsilon}(\varphi)- \rho \mathcal{L}^d(\varphi)\bigg| 
 \\
\le \sum_{x\in\spt \mu_\e} \frac{1}{C^d}\int_{B_\e(x)} |\f(y) -\f(x)| \, dy  + 
 \bigg| \frac{\varepsilon^d \omega_d}{C^d} \mu_{\varepsilon}(\varphi)- \rho \mathcal{L}^d(\varphi)\bigg| 
 \\
  \leq 2\e \frac{\varepsilon^d \omega_d}{C^d}\mu_{\varepsilon}(\mathbb{R}^d)  \|\nabla \f\|_{L^\infty}+  
 \bigg| \frac{\varepsilon^d \omega_d}{C^d} \mu_{\varepsilon}(\varphi)- \rho \mathcal{L}^d(\varphi)\bigg|.
  \end{split} 
\end{equation*}
%In view of \eqref{bouhm} and 
Since $\frac{\varepsilon^d \omega_d}{C^d}\mu_{\varepsilon} \weakstar \rho \mathcal{L}^d$, the claim follows.

\end{proof}

\section{Riesz interactions for $\sigma\in(-d,0)$}
Here we introduce and analyze the Riesz interaction functionals in the integrable case $\sigma\in(-d,0)$.

 \subsection{The energy functionals}  For every  $\varepsilon>0$ and $ \sigma \in (-d,0)$,  let \mbox{  $f_{\varepsilon}^{\sigma}: [0,+\infty) \rightarrow \overline{\mathbb{R}} $}  be defined by

\begin{equation}
 f_{\varepsilon}^{\sigma}(r):=
\left\{
\begin{aligned}
& +\infty & \text{ for } r \in [0,2 \varepsilon) \, , \\
& -\frac{1}{r^{d+\sigma}} & \text{ for }  r \in [2 \varepsilon,\infty) \, . 
\end{aligned}
%\qquad \text{ if }  \sigma \in (-d,0),
\right.
\end{equation}

Let $C^d$ be the volume density of the optimal ball  packing in $\mathbb{R}^{d}$ defined in \eqref{C^d0}. 
 
Let $X= \{x_{1}, \cdots, x_{N} \}$ be a finite subset of $\mathbb{R}^{d}$. The corresponding  energy  $F_{\varepsilon}^{\sigma}(X)$ is defined  as
\begin{equation*}
F_{\varepsilon}^{\sigma}(X):= 
\sum_{i\neq j} f_{\varepsilon}^{\sigma}(\vert x_{i} -x_{j} \vert) \bigg( \frac{\omega_d \varepsilon^{d}}{C_{d}} \bigg)^{2}.
\end{equation*}
Clearly, there is a one-to-one correspondence, that we denote by $\mathcal{A}$,  between the family of finite subsets of $\mathbb{R}^{d}$ and the family of {empirical measures}. 
We introduce the energy   \mbox{$\mathcal{F}_{\varepsilon}^{\sigma} : \mathcal{M}_{b}(\mathbb{R}^{d}) \rightarrow \overline{\mathbb{R}} $ } as a function of the empirical measure as
follows:
\begin{equation}\label{deffes}
\mathcal{F}_{\varepsilon}^{\sigma}(\mu):=
\left\{
\begin{aligned}
& F_{\varepsilon}^{\sigma}(\mathcal{A}( \mu) ) & \text{if $ \mu  \in \mathcal{EM}_\e$,} \\
& +\infty & \text{elsewhere.} 
\end{aligned}
\right.
\end{equation} 
The functional $\mathcal{F}_{\varepsilon}^{\sigma}$ may  also be rewritten as 
\begin{equation*}
\mathcal{F}_{\varepsilon}^{\sigma}(\mu)=
\left\{
\begin{aligned}
& \int_{\mathbb{R}^{d}} \int_{\mathbb{R}^{d}} f_{\varepsilon}^{\sigma}(\vert x-y \vert) \bigg( \frac{\varepsilon^{d} \omega_d}{C_{d}}\bigg)^{2}d \mu \otimes \mu  & \text{if $ \mu  \in \mathcal{EM}_\e$,} \\
& +\infty & \text{elsewhere.} 
\end{aligned}
\right.
\end{equation*} 

We observe that the range of the  functionals $\mathcal{F}_{\varepsilon}^{\sigma}$ is $(-\infty,0] \cup \{+\infty\}$.
Therefore, we do not expect compactness properties for sequences with bounded energy. In fact, it is easy to construct, adding more and more masses,  a sequence $\{\mu_\e\}_{\e\in(0,1)}\subset \mathcal{EM}_\e$
with $\e^d \mu_\e(\R^d)\to +\infty$ and $\mathcal{F}_{\varepsilon}^{\sigma} (\mu_\e) \to -\infty$   as $\e\to 0$.  Moreover, tight convergence can also fail by  loss of mass at infinity, 
also for sequences with  
$\e^d \mu_\e(\R^d)\le C$.  Indeed, let   ${T}^{d}$ be an optimal configuration for the optimal packing, as  in Definition \ref{def.T^d e C^d}.
Let $\{z_\e\}_\e \subset \R^d$ with $|z_\e| \to +\infty$ as $\e\to 0$.   Setting $\mu_{\varepsilon}=\sum_{x\in \e \mathrm{T}^{d} \cap B_1(z_\e)}  \delta_{x}$, 
we have that $\e^d \mu_\e(\R^d)\le C$ for some $C$ independent of $\e$, but in general $\e^d \mu_\e$ does not admit converging subsequences in the tight topology. 

Now we perturb the energy functionals by adding suitable confining forcing terms that yield the desired compactness properties.  	
%For solve the failed compactness we considered the functionals

Let $g \in C^0(\mathbb{R}^d)$.
Recalling that $C^d$ is the volume density defined in \eqref{C^d0},  for all $\e \in(0,1)$  we introduce the functionals  $\mathcal{T}_{\varepsilon}^{\sigma}: \mathcal{M}_{b}(\mathbb{R}^d) \rightarrow \overline{\mathbb{R}}$ 
defined as
\begin{equation} \label{Funzmoddiscret}
\mathcal{T}_{\varepsilon}^{\sigma}(\mu):=  \mathcal{F}_{\varepsilon}^{\sigma}(\mu)+\mathcal{G}_{\varepsilon}^{\sigma}(\mu),
\end{equation}
where
\begin{equation*}
\mathcal{G}_{\varepsilon}^{\sigma}(\mu):= \int_{\mathbb{R}^d} g(x) \frac{\varepsilon^d \omega_d}{C^d} d\mu \, .
\end{equation*} 

\subsection{Compactness} 
In this section we study  compactness properties for the functionals $\mathcal{T}_{\varepsilon}^{\sigma}$ introduced in \eqref{Funzmoddiscret}. 
We assume that
\begin{equation}\label{defg}
g(x) \ge C_1 + C_2  |x|^{-\sigma}, \qquad \text{ for some } C_1\in\R,\,C_2> 0.
\end{equation}

\begin{theorem}[Compactness for ${\mathcal{T}}_{\epsilon}^{\sigma}$]\label{teco}
There exists a constant $C^*(\sigma,d) >0$ such that, if $g$ satisfies \eqref{defg} with $C_2> C^*(\sigma,d)$, then the following compactness property hold:
let $M>0$ and let 
 $\{ \mu_{\varepsilon}\}_{\varepsilon \in (0,1)} \subset \mathcal{M}_{b}(\mathbb{R}^d)$ be such that 
\begin{equation*}
 {\mathcal{T}}_{\varepsilon}^{\sigma}(\mu_{\varepsilon}) \leq M, \quad \text{ for all } \varepsilon>0 \, .
\end{equation*}
Then, up to a subsequence, $\frac{\varepsilon^d \omega_d}{C^d} \mu_{\varepsilon} \rightarrow \rho \mathcal{L}^d$ tightly in $\mathcal{M}_{b}(\mathbb{R}^d)$, for some $\rho \in \mathrm{L}^1(\mathbb{R}^d, [0,1])$.	
\end{theorem}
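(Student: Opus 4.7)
The plan is to derive an a priori inequality of the form
\[
-\mathcal F_\varepsilon^\sigma(\mu_\varepsilon)\le C^*(\sigma,d)\,\widetilde M(\nu_\varepsilon)+o(1),\qquad \widetilde M(\nu):=\int_{\mathbb{R}^d}|x|^{-\sigma}\,d\nu,
\]
where $\nu_\varepsilon:=\frac{\varepsilon^d\omega_d}{C^d}\mu_\varepsilon$. Once this estimate is in hand, \eqref{defg} and the assumption $C_2>C^*$ immediately give
\[
(C_2-C^*)\,\widetilde M(\nu_\varepsilon)\le M+|C_1|\,\nu_\varepsilon(\mathbb{R}^d)+o(1),
\]
and a self-bounding argument then forces both $\nu_\varepsilon(\mathbb{R}^d)$ and $\widetilde M(\nu_\varepsilon)$ to be uniformly bounded; tightness will follow by Chebyshev's inequality.

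To prove the key estimate I first replace the discrete pair-sum by a continuous Riesz energy of the disjoint-ball measure $\hat\mu_\varepsilon$ from \eqref{defhm}. Since $|x_i-x_j|\ge 2\varepsilon$, the triangle inequality gives $|x-y|\le 2|x_i-x_j|$ for every $x\in B_\varepsilon(x_i)$ and $y\in B_\varepsilon(x_j)$ with $i\ne j$; summing over pairs,
\[
-\mathcal F_\varepsilon^\sigma(\mu_\varepsilon)\le 2^{d+\sigma}\,J(\hat\mu_\varepsilon),\qquad J(u):=\iint_{\mathbb{R}^d\times\mathbb{R}^d}\frac{u(x)\,u(y)}{|x-y|^{d+\sigma}}\,dx\,dy.
\]
Now $\hat\mu_\varepsilon$ has density bounded by $1/C^d$; moreover $d+\sigma\in(0,d)$, so the kernel $|z|^{-(d+\sigma)}$ is positive-definite and $J$ is a convex functional on the class $\mathcal{U}_m:=\{u:0\le u\le 1/C^d,\ \int u=m\}$. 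Its maximum on $\mathcal{U}_m$ is therefore attained at an extreme (bang-bang) point and, by the Riesz rearrangement inequality, at $u^{**}:=(1/C^d)\chi_{B_R}$ with $\omega_dR^d=mC^d$. By the bathtub principle applied to the linear moment $\widetilde M$, the same $u^{**}$ minimizes $\widetilde M$ on $\mathcal{U}_m$. An explicit computation (both $J(u^{**})$ and $\widetilde M(u^{**})$ scale like $m^{1-\sigma/d}$) gives $J(u^{**})=\tilde C(\sigma,d)\,\widetilde M(u^{**})$ for an explicit constant $\tilde C$, whence $J(\hat\mu_\varepsilon)\le \tilde C\,\widetilde M(\hat\mu_\varepsilon)$. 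A routine comparison (using $|y|\le|x_i|+\varepsilon$ on $B_\varepsilon(x_i)$, plus a trivial packing bound on the few points with $|x_i|<2\varepsilon$) yields $\widetilde M(\hat\mu_\varepsilon)\le 2^{-\sigma}\,\widetilde M(\nu_\varepsilon)+o(1)$, and the key estimate follows with $C^*(\sigma,d):=2^d\tilde C(\sigma,d)$.

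To close the argument I bootstrap on the total mass $m_\varepsilon:=\nu_\varepsilon(\mathbb{R}^d)$. The rearrangement analysis also yields the matching lower bound $\widetilde M(\hat\mu_\varepsilon)\ge \widetilde M(u^{**})\ge c'(\sigma,d)\,m_\varepsilon^{1-\sigma/d}$, and comparing $\hat\mu_\varepsilon$ and $\nu_\varepsilon$ as above gives $\widetilde M(\nu_\varepsilon)\ge 2^\sigma c'\,m_\varepsilon^{1-\sigma/d}-o(1)$. Substituting into $(C_2-C^*)\widetilde M(\nu_\varepsilon)\le M+|C_1|m_\varepsilon+o(1)$ produces
\[
\alpha(\sigma,d)\,(C_2-C^*)\,m_\varepsilon^{1-\sigma/d}\le M+|C_1|\,m_\varepsilon+o(1);
\]
since $1-\sigma/d>1$, this forces $m_\varepsilon$ to be uniformly bounded, hence so is $\widetilde M(\nu_\varepsilon)$. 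Then Chebyshev gives $\nu_\varepsilon(B_R^c)\le R^\sigma\widetilde M(\nu_\varepsilon)\to 0$ uniformly as $R\to\infty$, while the packing constraint gives $\nu_\varepsilon(B_R)\le 2^d\omega_dR^d/C^d$ on compact sets; together these yield tightness, and Lemma~\ref{dime} identifies the tight limit as $\rho\mathcal L^d$ with $\rho\in L^\infty(\mathbb{R}^d,[0,1])$. The hardest step will be the key estimate: the point is that the $J$-maximizer and the $\widetilde M$-minimizer on the density-bounded, fixed-mass class coincide at the bang-bang ball $u^{**}$, which both produces the universal constant $C^*(\sigma,d)$ and, through the super-linear lower bound $\widetilde M(\nu_\varepsilon)\ge c\,m_\varepsilon^{1-\sigma/d}-o(1)$, makes the bootstrap on the total mass possible.
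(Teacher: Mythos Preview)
Your argument is correct and rests on the same two pillars as the paper's proof: a Riesz rearrangement upper bound on the interaction energy and a bathtub lower bound on the moment $\widetilde M$, both of which scale like $m_\varepsilon^{1-\sigma/d}$ with $1-\sigma/d>1$, forcing the total mass to be bounded. The paper carries out these as two separate estimates (its Steps~1 and~2) and then combines them (Step~3); you package the same inequalities as a single comparison $J(\hat\mu_\varepsilon)\le \tilde C\,\widetilde M(\hat\mu_\varepsilon)$ via the observation that the $J$-maximizer and the $\widetilde M$-minimizer on $\mathcal U_m$ coincide. This is a cosmetic difference.

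Two remarks. First, the convexity detour is unnecessary: $\hat\mu_\varepsilon=(1/C^d)\chi_{A_\varepsilon}$ is \emph{already} a bang-bang function, so Riesz rearrangement applied directly to characteristic functions gives $J(\hat\mu_\varepsilon)\le J(u^{**})$ without invoking extreme points. This is exactly what the paper does in its Step~2. Second, your tightness argument via Chebyshev is a genuine simplification over the paper's Step~4. The paper splits $\mu_{\varepsilon_n}=\mu^1_{\varepsilon_n}+\mu^2_{\varepsilon_n}$ along $\partial B_{R_n}$, re-applies the energy bounds to each piece and to the cross term, and derives a contradiction from $\mathcal T^\sigma_{\varepsilon_n}(\mu_{\varepsilon_n})\ge -C+C_2\delta R_n^{-\sigma}$. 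Your route is shorter: once $\widetilde M(\nu_\varepsilon)$ is uniformly bounded (which follows immediately from the mass bound and the inequality $(C_2-C^*)\widetilde M(\nu_\varepsilon)\le M+|C_1|m_\varepsilon+o(1)$), the Chebyshev bound $\nu_\varepsilon(B_R^c)\le R^\sigma\,\widetilde M(\nu_\varepsilon)$ gives uniform tightness directly, since $\sigma<0$. The local bound $\nu_\varepsilon(B_R)\le 2^d\omega_d R^d/C^d$ you mention is correct but superfluous here, as the uniform bound on $m_\varepsilon$ already yields weak-$*$ precompactness.
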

\begin{proof}

In view of \eqref{defg},  it is enough to prove the theorem for $g(x) = C_1 +  C_2 |x|^{-\sigma}$ with  $C_2>C^*(\sigma,d)$ for some $C^*(\sigma,d) >0$.
We divide the proof in several steps. 

\vskip3pt
\noindent 
{\it Step 1. }
For all $\mu \in \mathcal{EM}_{\varepsilon}  $  set $K_{\varepsilon}(\mu):= \e^d \omega_d \mu(\R^d)$ and  let $R_{\varepsilon}(\mu)>0$ be such that $R_{\varepsilon}(\mu)^d \omega_{d}=  K_{\varepsilon}(\mu) $.
In this step we  prove that there
exists $\tilde C(\sigma,d)>0$ such that for all $\mu:=\sum_{i=1}^N \delta_{x_i} \in \mathcal{EM}_{\varepsilon}(\mathbb{R}^d)$ we have
$$
 \sum_{i=1}^{N} \frac{\varepsilon^d \omega_d}{C^d} \vert x_i \vert^{-\sigma} \geq \tilde C(\sigma,d)  \big(K_{\varepsilon}(\mu)\big)^{\frac{d-\sigma}{d}} \, .
 %=C(q,d) \int_{B(0,R_\varepsilon(\mu))} \vert x \vert^q dx;
 $$
Here and later on we will assume without loss of generality (and whenever it will be convenient) that $|x_i| \ge \e$ for all $x_i\in {\spt}(\mu)$. By triangular inequality we have  $|y|\le |x_i|+\e \le 2 |x_i|$ for all $y\in B_\e(x_i)$. Then, 
$$
\omega_d \e^d |x_i|^{-\sigma} = \int_{B_\e(x_i)} |x_i|^{-\sigma} \, dy \ge 
\frac{1}{2^{-\sigma}} \int_{B_\e(x_i)} |y|^{-\sigma} \, dy.
$$
%Notice that there is at most one $\bar x_i$ with $|x_i|\le \e$; 
Let $A_\e$ be the union of all the balls $B_\e(x_i)$. We have 
\begin{multline*}
\sum_{i=1}^{N} \frac{\varepsilon^d \omega_d}{C^d} \vert x_i \vert^{-\sigma} 
%\ge
%\sum_{|x_i|\ge \e} \frac{\varepsilon^d \omega_d}{C^d} \vert x_i \vert^{-\sigma}
\ge
\sum_{i=1}^N 
\frac{1}{2^{-\sigma} C^d} \int_{B_\e(x_i)} |y|^{-\sigma} \, dy
\\
=
\frac{1}{2^{-\sigma} C^d} \int_{A_\e} |y|^{-\sigma} \, dy
=
\frac{1}{2^{-\sigma} C^d} \int_{A_\e\cap B_{R_{\varepsilon}(\mu)} } |y|^{-\sigma} \, dy
+
\frac{1}{2^{-\sigma} C^d} \int_{A_\e\setminus B_{R_{\varepsilon}(\mu)} } |y|^{-\sigma} \, dy 
\\
\ge
\frac{1}{2^{-\sigma} C^d} \int_{B_{R_{\varepsilon}(\mu)}} |y|^{-\sigma} \, dy 
=
\tilde C(\sigma,d)  \big(K_{\varepsilon}(\mu)\big)^{1-\frac{\sigma}{d}} \, ,
%\, , 
%\frac{\varepsilon^d \omega_d}{C^d} \vert x_i \vert^{q}
%\geq C(q,d)  \big(K_{\varepsilon}(\mu)\big)^{\frac{q+d}{d}} \, .
\end{multline*}
where in the last inequality we have used that  $|A_\e|= K_{\varepsilon}(\mu) = |B_{R_{\varepsilon}(\mu)}|$, and that $|y_1|^{-\sigma}\ge |y_2|^{-\sigma}$ for all $y_1 \in  
A_\e\setminus B_{R_{\varepsilon}(\mu)} $, $y_2\in B_{R_{\varepsilon}(\mu)}$.

\vskip3pt
\noindent 
{\it Step 2. }
Here we prove that there exists $\hat C(\sigma,d)>0$ such that, for all $\mu \in \mathcal{EM}_{\varepsilon}$, 
%    \begin{equation*}
%    \begin{split}
%	 &  \sum_{i \neq j}^{N} \frac{( {\varepsilon^d \omega_d})^{2}}{\vert x_i -x_j \vert^{d +\sigma}}  \leq \\  
%	& \hat C(\sigma,d) \int_{B(0,R_\varepsilon)} \int_{B(0,R_{\varepsilon})} \frac{1}{\vert x-y \vert^{\sigma+d}}dxdy \leq \\
%	&  \hat C(\sigma,d)\big( K_\varepsilon(\mu) \big)^{1-\frac{\sigma}{d}}.
%	\end{split} 
%	\end{equation*}
   \begin{equation*}
	\frac{1}{(C^d)^2}   
	\sum_{i \neq j} \frac{( {\varepsilon^d \omega_d})^{2}}{\vert x_i -x_j \vert^{d +\sigma}}  \leq   
		  \hat C(\sigma,d)\big( K_\varepsilon(\mu) \big)^{1-\frac{\sigma}{d}}.
	\end{equation*}
First, we observe that by triangular inequality $\vert x_i - x_j \vert \geq \frac{1}{3} \vert x- y \vert $ for all $ (x,y) \in B_{\varepsilon}(x_i) \times B_{\varepsilon}(x_j)$. Then, 
there exists $\hat C(\sigma,d) >0$ such that 
\begin{multline}\label{Riesz}
%\frac{\big( \varepsilon^d \omega_d \big)^2}{2^{d+\sigma}} 
 \frac{1}{(C^d)^2}  
 \sum_{i \neq j} \frac{( {\varepsilon^d \omega_d})^{2}}{\vert x_i -x_j \vert^{d+\sigma}} \le
\hat C(\sigma,d)  
\sum_{i \neq j} \int_{B_{\varepsilon}(x_i)} \int_{B_{\varepsilon}(x_j)} \frac{1}{\vert x-y \vert^{d+\sigma}}dxdy  \\
\le 
\hat C(\sigma,d) \int_{B_{R_{\varepsilon}(\mu_\e)}(0)} \int_{B_{R_{\varepsilon}(\mu_\e)}(0)} \frac{1}{\vert x-y \vert^{d+ \sigma}}dxdy
\\
 \leq 
\hat C(\sigma,d)   \int_{B_{R_{\varepsilon}(\mu_\e)}(0)} dx \int_{B_{2 R_{\varepsilon}(\mu_\e)}(0)} \frac{1}{\vert z \vert^{d+\sigma}} dz= \hat C(\sigma,d) (K_{\varepsilon}(\mu))^{1-\frac{\sigma}{d}},
\end{multline}
where the second inequality is nothing but Riesz inequality, see \cite{LiebLoss}.

\vskip3pt
\noindent 
{\it Step 3. }
%Here we conclude the proof of the theorem. 
Here we prove that there exists $C^*(\sigma,d)>0$  such that, if $C_2> C^*(\sigma,d)$, then
the following implication holds:
$$
\text{ if } \limsup_{\e} \mathcal{T}_\varepsilon^{\sigma}(\mu_\varepsilon) < +\infty,  \text{ then }
\limsup_{\e} \frac{\varepsilon^d \omega_d}{C^d} \mu_{\varepsilon}(\mathbb{R}^d) < +\infty.
$$ 
By {\it Step 1} and {\it Step 2} there exists $C(\sigma,d)>0$ such that 
$$
 \mathcal{T}_\varepsilon^{\sigma}(\mu_\varepsilon) \ge 
\frac{C_1}{C^d}  K_{\varepsilon}(\mu)  +
 (-\hat C(\sigma,d) + C_2 \tilde C(\sigma,d)) (K_{\varepsilon}(\mu))^{1-\frac{\sigma}{d}} .
$$
It is then sufficient to choose $C_2$ large enough, so that $ (-\hat C(\sigma,d) + C_2 \tilde C(\sigma,d)) >0$.

\vskip3pt
\noindent 
{\it Step 4. }
We now prove the tight converge, up to a subsequence, of sequences $\{\mu_\e\}_\e$ with bounded energy. In view of Lemma \ref{dime}, this step concludes the proof of the theorem.  By {\it Step 3} we have that $ \frac{\varepsilon^d \omega_{d}}{C^d}\mu_{\varepsilon}(\mathbb{R}^d) \leq \tilde{M}$ for all $\varepsilon \in (0,1)$ and some $\tilde M>0$.  
Arguing by  contradiction, assume that  there exists $\delta>0$, $\varepsilon_{n} \rightarrow 0^+$ and $R_{n} \rightarrow +\infty$ as $n \rightarrow +\infty$, such that 
\begin{equation}
\frac{\varepsilon_{n}^{d} \omega_d}{C^d} \mu_{\varepsilon_{n}}(\mathbb{R}^d\setminus B_{R_n}(0)) \geq \delta \quad \forall n.
\end{equation}
Now let us split $\mu_{\varepsilon_{n}}$ into two components: $\mu_{\e_n}^1:= \mu_{\varepsilon_{n}}\lfloor_{B_{R_n}(0)}$ and $\mu_{\e_n}^2:= \mu_{\varepsilon_{n}}\lfloor_{\mathbb{R}^d \setminus B_{R_n}(0)}$; then 
\begin{equation}\label{prima}
\begin{split}
\mathcal{T}_{\varepsilon_{n}}^{\sigma}(\mu_{\varepsilon_{n}})= 
& \mathcal{T}_{\varepsilon_{n}}^{\sigma}(\mu_{\e_n}^1) 
 +\mathcal{T}_{\varepsilon_{n}}^{\sigma}(\mu_{\e_n}^2) \\
& -2 \int_{B_{R_n}(0)} \int_{\mathbb{R}^d \setminus B_{R_n}(0)} \frac{1}{\vert x-y \vert^{\sigma+d}}\bigg( \frac{\varepsilon_{n}^{d} \omega_d}{C^d}\bigg)^{2} d \mu_{\varepsilon_{n}} \otimes \mu_{\varepsilon_{n}}. 
\end{split}
\end{equation}
From  {\it Step 2 }  we have that there exists $C>0$ independent of $n$ such that
\begin{equation}\label{eq2-teo-comp-tau}
\mathcal{T}_{\varepsilon_{n}}^{\sigma}(\mu_{\varepsilon_{n}}^1)
 \ge -\hat C(\sigma,d) (K_{\varepsilon_n}(\mu_{\e_n}^1))^{1-\frac{\sigma}{d}} \ge C.
 \end{equation}
 Again by {\it Step 2}, applied now to $\mu_{\e_n}^2$, we have that there exists $C>0$ independent of $n$ such that 
\begin{equation*}
\int_{\mathbb{R}^d \setminus B_{R_n}(0)} \int_{\mathbb{R}^d \setminus B_{R_n}(0)}\frac{1}{\vert x-y \vert^{d+\sigma}}\bigg(\frac{\varepsilon_n^d \omega_{d}}{C^d}\bigg)^{2}d \mu_{\varepsilon_n} \otimes \mu_{\varepsilon_n} \leq  C.
\end{equation*} 
Therefore, 
\begin{equation}\label{eq3-teo-comp-tau}
\mathcal{T}_{\varepsilon_{n}}(\mu_{\varepsilon_{n}}^2) \geq -C  -|C_1| \tilde M + C_2 \int_{\mathbb{R}^d \setminus B_{R_n}(0)} R_{n}^{-\sigma} \frac{\varepsilon_{n}^d \omega_d}{C^d}d \mu_{\varepsilon_{n}}  
\geq  -C +C_2\delta R_{n}^{-\sigma} .
\end{equation}
Finally, by Riesz inequality (or equivalently, arguing as in \eqref{Riesz}) we have that there exists $C>0$ independent of $n$ such that 
\begin{equation}\label{eq4-teo-comp-tau}
 \int_{B_{R_n}(0)} \int_{\mathbb{R}^{d}\setminus B_{R_n}(0)} \frac{-1}{\vert x-y \vert^{d+\sigma}} \bigg(\frac{\varepsilon_{n}^d \omega_d}{C^d}\bigg)^2 d \mu_{\varepsilon_{n}}\otimes\mu_{\varepsilon_{n}}(x,y)\geq -C
\end{equation}
Now plugging  \eqref{eq2-teo-comp-tau},\eqref{eq3-teo-comp-tau} and \eqref{eq4-teo-comp-tau}    into  \eqref{prima}, we deduce that 
\begin{equation*}
 M\geq \mathcal{T}_{\varepsilon_{n}}^{\sigma}(\mu_{\varepsilon_{n}})\geq -C + C_2 \delta R_{n}^{-\sigma},
\end{equation*}
for some $C$ independent of $n$, which clearly provides  a contradiction for $n $ large enough.

%\begin{equation*}
%\tilde{\mu}_{\varepsilon_{n}}:= \sum_{i=1}^{N_{\varepsilon_{n}}} \chi_{B(x_i,\varepsilon_{n})}.
%\end{equation*}
%We obtain, from the formula \eqref{eq1teo-comp-tau} and from rearrangement inequality, that
%\begin{equation*}\label{eq4-teo-comp-tau}
%\begin{split}
%& \int_{B(0,R_n)} \int_{\mathbb{R}^{d}\setminus B(0,R_n)} \frac{-1}{\vert x-y \vert^{d+\sigma}} \bigg(\frac{\varepsilon_{n}^d \omega_d}{C^d}\bigg)^2 d \mu_{\varepsilon_{n}}\otimes\mu_{\varepsilon_{n}}(x,y)\geq \\
%&-C(\sigma,d) \int_{B(0,R_n)} \int_{\mathbb{R}^d \setminus B(0,R_n)} \frac{1}{\vert x-y \vert^{\sigma+d}}d \tilde{\mu_{\varepsilon_n}}\otimes\tilde{\mu_{\varepsilon_n}}(x,y) \geq \\
%&-C(\sigma,d)K_{\varepsilon_{n}}(\mu_{\varepsilon_{n}})^2 \geq -C(\sigma,d)\tilde{M}^2.
%\end{split}
%\tag{$\theta$}
%\end{equation*}
%Now from \eqref{eq2-teo-comp-tau},\eqref{eq3-teo-comp-tau},\eqref{eq4-teo-comp-tau} we obtain that 
%\begin{equation*}
%M\geq \mathcal{T}_{\varepsilon_{n}}^{\sigma}(\mu_{\varepsilon_{n}})\geq -C(\sigma,d,\tilde{M})+R_{n}^q\delta
%\end{equation*}
%we have a contradiction for $n \rightarrow +\infty$.
\end{proof}

\subsection{$\Gamma$-convergence}
In this section we study the  $\Gamma$-convergence of the energy functionals defined in \eqref{deffes} and \eqref{Funzmoddiscret}.
% described in subsection \ref{SezEnergie}.
%\subsection{Case $\sigma \in (-d,0)$}

\begin{proposition}\label{propco}
	Let $\{ \mu_{\varepsilon}\}_{\varepsilon \in (0,1)} \subset \mathcal{M}_{b}(\mathbb{R}^d)$ with $\mu_{\varepsilon} \in \mathcal{EM}_{\varepsilon}$ for all $\varepsilon \in (0,1)$
	and let $\rho\in L^1(\R^d;[0,1]) $ be such that $\frac{\varepsilon^d \omega_d}{C^d} \mu_{\varepsilon} \rightarrow \rho \mathcal{L}^d$ tightly.
	Let moreover
	%$h:\mathbb{R}^{2d} \rightarrow \mathbb{R} $ be defined by
	$h(x,y):=\frac{1}{\vert x-y\vert^{d+\sigma}}$ for all $x, \, y\in \R^d$ with  $x \neq y$. Then, 
	\begin{equation*}
	\biggl(\frac{\varepsilon^d \omega_d}{C^d}\biggr)^2 \mu_{\varepsilon} \otimes \mu_{\varepsilon}(h) \rightarrow \rho \mathcal{L}^d \otimes \rho \mathcal{L}^d(h), \quad \text{ as } \; \varepsilon \rightarrow 0^+ \, .
	\end{equation*}
	%where $h$ is the same map define in the precedent lemma.
\end{proposition}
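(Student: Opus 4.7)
The plan is a truncation-and-tail argument. For $M > 0$ set $h_M := \min(h, M)$, which is continuous and bounded on $\R^d \times \R^d$: it equals $M$ on the tubular neighborhood $\{|x-y| \le \delta_M\}$, with $\delta_M := M^{-1/(d+\sigma)}$, and equals $|x-y|^{-(d+\sigma)}$ outside. Since by hypothesis $(\varepsilon^d \omega_d/C^d)\mu_\varepsilon \to \rho \mathcal{L}^d$ tightly, the product measures $(\varepsilon^d \omega_d/C^d)^2 \mu_\varepsilon \otimes \mu_\varepsilon$ converge tightly in $\mathcal{M}_b(\R^d \times \R^d)$ to $\rho \mathcal L^d \otimes \rho \mathcal L^d$. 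Testing against the bounded continuous $h_M$,
\[
\lim_{\varepsilon \to 0}(\varepsilon^d \omega_d/C^d)^2 \mu_\varepsilon \otimes \mu_\varepsilon(h_M) = \rho \mathcal L^d \otimes \rho \mathcal L^d(h_M).
\]
The discrete diagonal contribution equals $M (\varepsilon^d \omega_d/C^d)^2 \mu_\varepsilon(\R^d)$, which is $O(\varepsilon^d M) \to 0$ by the tight mass bound, and the diagonal is $\rho\mathcal L^d\otimes\rho\mathcal L^d$-null, so the identity persists when one integrates only off-diagonal (as required, since $h$ is defined only for $x \neq y$).

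The heart of the argument is a tail estimate uniform in $\varepsilon$. I claim
\[
\limsup_{\varepsilon \to 0}\Big(\frac{\varepsilon^d \omega_d}{C^d}\Big)^2 \sum_{\substack{i\neq j \\ |x_i-x_j|<\delta_M}} \frac{1}{|x_i-x_j|^{d+\sigma}} \le C\, \delta_M^{-\sigma},
\]
with the right-hand side vanishing as $M\to\infty$ since $-\sigma>0$ and $\delta_M\to 0$. For each $x_i$ the hard sphere constraint $|x_i-x_j|\ge 2\varepsilon$ combined with a dyadic partition yields: the balls $\{B_\varepsilon(x_j)\}_{j\neq i}$ with $\varepsilon 2^k \le |x_i-x_j| < \varepsilon 2^{k+1}$ are disjoint and sit in an annulus of volume $\sim(\varepsilon 2^k)^d$, so their number is at most $C\,2^{kd}$. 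Weighting by $(\varepsilon 2^k)^{-(d+\sigma)}$ and summing over $k$ with $\varepsilon 2^k < \delta_M$ produces a geometric series in $2^{-\sigma}$ dominated by its last term, giving the per-center bound $C\,\delta_M^{-\sigma}\varepsilon^{-d}$. Summation over the $N$ centers, together with the standard mass bound $\varepsilon^d N \le C$ from tight convergence and the $(\varepsilon^d)^2$ prefactor, yields the claim.

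To close, observe that $\rho \mathcal L^d \otimes \rho \mathcal L^d(h) < \infty$: as $\rho\in L^1$ with $\rho\le 1$ and $d+\sigma<d$, splitting the inner integral into $\{|x-y|<1\}$ and its complement gives $\int \rho(y)|x-y|^{-(d+\sigma)}\,dy \le C+\|\rho\|_{L^1}$ uniformly in $x$, and the outer integration is then harmless. Monotone convergence yields $\rho \mathcal L^d\otimes\rho \mathcal L^d(h_M)\to\rho \mathcal L^d\otimes\rho \mathcal L^d(h)$ and analogously $\rho\mathcal L^d\otimes\rho\mathcal L^d(h-h_M)\to 0$ as $M\to\infty$. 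A triangle inequality, applied first with $\varepsilon\to 0$ at fixed $M$ and then with $M\to\infty$, concludes the proof. I expect the main obstacle to be the dyadic tail bound above: it is precisely the hard sphere packing structure that converts the naive per-particle $\varepsilon^{-(d+\sigma)}$ blow-up into a tail of order $\delta_M^{-\sigma}$ that is summable as $M \to \infty$.
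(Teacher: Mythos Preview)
Your argument is correct. Both your proof and the paper's use the same near/far-diagonal decomposition (your truncation at level $M$ is equivalent to the paper's restriction to the tube $D(R)=\bigcup_x \{x\}\times B_R(x)$ with $R=\delta_M$), but the near-diagonal tail is handled differently. The paper introduces the smoothed measures $\hat\mu_\varepsilon=\frac{1}{C^d}\sum_i\chi_{B_\varepsilon(x_i)}$, exploits their $L^\infty$ density bound to get $\int_{D(R)}h\,d\hat\mu_\varepsilon\otimes\hat\mu_\varepsilon\le \hat\mu_\varepsilon(\R^d)\,\omega(R)$ with $\omega(R)\to 0$, and then needs a second step comparing $\mu_\varepsilon\otimes\mu_\varepsilon$ with $\hat\mu_\varepsilon\otimes\hat\mu_\varepsilon$ via the triangle inequality $|x-y|\le 2|x_i-x_j|$. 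Your dyadic counting works directly on the atoms and avoids both the auxiliary measure and Lemma~\ref{lediti}; it is more self-contained and arguably more elementary. The paper's route, on the other hand, packages the hard-sphere constraint once and for all into the density bound $\hat\mu_\varepsilon\le\frac{1}{C^d}\mathcal L^d$, which is a reusable principle and makes the integrability of the kernel (rather than the packing combinatorics) do the work. The two tail bounds are numerically the same: your $C\delta_M^{-\sigma}$ is exactly the paper's $\omega(R)$ at $R=\delta_M$.
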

\begin{proof}
The proof is divided in several steps:

{\it Step 1. } Here we prove that 
	%if $\{\mu_{\varepsilon}\}_{\varepsilon \in (0,1)} \mathcal{M}_{b}(\mathbb{R}^d)$ such that $\mu_{\varepsilon}\in \mathcal{M}_{\delta_{x},\varepsilon}$ for all $\varepsilon \in (0,1)$ and $\frac{\varepsilon^d \omega_d}{C^d}\mu_{\varepsilon} \rightarrow f \mathcal{L}^d $ tight, then 
	\begin{equation*}
	\hat{\mu}_{\varepsilon}\otimes\hat{\mu}_{\varepsilon}(h)\rightarrow \rho \mathcal{L}^d \otimes \rho \mathcal{L}^d(h), \; \; \text{ as } \; \varepsilon \rightarrow 0^+,
	\end{equation*}
where $\hat{\mu}_{\varepsilon}$ are defined as in \eqref{defhm} (with $\mu$ replaced by $\mu_\e$). 
%\vskip20pt

For all $R>0$ we set
\begin{equation}\label{DR}
D(R):= \underset{x \in \mathbb{R}^d}{\bigcup} (\{x\} \times B_R(x)).
\end{equation}
We have
\begin{align}
& \bigg| \int_{\mathbb{R}^d} \int_{\mathbb{R}^d} \frac{1}{\vert x-y \vert^{d +\sigma}} d \hat{\mu}_{\varepsilon} \otimes \hat{\mu}_{\varepsilon}- \int_{\mathbb{R}^d} \int_{\mathbb{R}^{d}} \frac{1}{\vert x-y \vert^{d+ \sigma}} \rho(x) \rho(y)dxdy \bigg| 
\nonumber \\
\leq  & \int_{D(R)} \frac{1}{\vert x-y \vert^{d+\sigma}}d \hat{\mu}_{\varepsilon} \otimes \hat{\mu}_{\varepsilon}   \label{ambarabacci1} \\
 & + \int_{D(R)} \frac{1}{\vert x-y \vert^{d+\sigma}} \rho(x)\rho(y)dxdy   \label{ambarabacci2} \\
& + \bigg| \int_{\mathbb{R}^{2d} \setminus D(R)} \frac{1}{\vert x-y \vert^{d +\sigma}} d \hat{\mu}_{\varepsilon} \otimes \hat{\mu}_{\varepsilon}- \int_{\mathbb{R}^{2d} \setminus D(R)} \frac{1}{\vert x-y \vert^{d+\sigma}} \rho(x)\rho(y)dxdy\bigg|.  \label{ambarabacci3}
\end{align}
Moreover, we have
\begin{align*}
\int_{D(R)} \frac{1}{\vert x-y \vert^{d+\sigma}}d \hat{\mu}_{\varepsilon} \otimes \hat{\mu}_{\varepsilon} =
\int_{\R^d}  d\hat{\mu}_{\varepsilon}  \int_{B_R(x)}   \frac{1}{\vert x-y \vert^{d+\sigma}} d\hat{\mu}_{\varepsilon} \\
\le
\int_{\R^d}  d\hat{\mu}_{\varepsilon}   \frac{1}{C^d} \int_{B_R(x)}   \frac{1}{\vert x-y \vert^{d+\sigma}}\, dy 
= \hat{\mu}_{\varepsilon}(\R^d) \omega(R),
\end{align*}
where $\omega(R) \to 0$ as $R\to 0$. 
This proves that the quantity in \eqref{ambarabacci1} tends to $0$ as $R\to 0$, uniformly in $\e$; a fully analogous argument shows that the same  holds true also for the quantity in \eqref{ambarabacci2}. Finally, the quantity in \eqref{ambarabacci3} tends to $0$ as $\e\to 0$ (for fixed $R$)  since   
 $ \frac{1}{\vert x-y \vert^{d+\sigma}}$ is continuous and bounded in $\R^{2d} \setminus D(R)$, 
and $\hat \mu_e \otimes \hat\mu_\e \to \rho \mathcal L^d \otimes \rho \mathcal L^d$ tightly in $\R^{2d}$,  and  hence also in $\R^{2d} \setminus D(R)$.

{\it Step 2. } Here we prove that 
%if $\{ \mu_{\varepsilon}\}_{\varepsilon \in (0,1)} \subset \mathcal{M}_{b}(\mathbb{R}^d)$ such that $\mu_{\varepsilon} \in \mathcal{M}_{\delta_{x},\varepsilon}(\mathbb{R}^d)$ for all $\varepsilon>0$ and $\frac{\varepsilon^d \omega_d}{C^d}\mu_{\varepsilon} \rightarrow f \mathcal{L}^d$ tight, then
	\begin{equation*}
	 \bigg(\frac{\varepsilon^d \omega_d}{C^d}\bigg)^2 \mu_{\varepsilon} \otimes \mu_{\varepsilon}(h)-
	 \hat{\mu}_{\varepsilon}\otimes \hat{\mu}_{\varepsilon}(h)   \rightarrow 0 \quad \varepsilon \rightarrow 0^+.
	\end{equation*}
	%where $h$ is the same function define in the precedent lemma.
Let $x_i,x_j \in \spt(\mu_{\varepsilon})$, with $i \neq j$;  for all $ x \in B_\e(x_i), \, y \in B_\e(x_j)$, by triangular inequality we have 
$\vert x-y \vert \leq 2 \vert x_i -x_j \vert$, and hence  
\begin{equation} \label{grigiotto}
\bigg(\frac{\varepsilon^d \omega_d}{C^d}\bigg)^2 \frac{1}{\vert x_i -x_j \vert^{d +\sigma}} \leq 2^{d+\sigma} \int_{B_\varepsilon(x_i)} \int_{B_{\varepsilon}(x_j)}\frac{1}{(C^d)^2} \frac{1}{\vert x-y \vert^{d +\sigma}}dxdy .
\end{equation}
%for all $x_i, x_j \in \spt(\mu_{\varepsilon})$ such that $i \neq j$.
 Let $D(R)$ be the set defined in \eqref{DR}.  We obtain that 
\begin{align}
& \bigg| \int_{\mathbb{R}^{2d}} \frac{1}{\vert x-y \vert^{d+\sigma}} \bigg(\frac{\varepsilon^d \omega_d}{C^d}\bigg)^2 d \mu_{\varepsilon} \otimes \mu_{\varepsilon} - \int_{\mathbb{R}^{2d}}  \frac{1}{\vert x-y \vert^{d +\sigma}} d \hat{\mu}_{\varepsilon} \otimes \hat{\mu}_{\varepsilon}\bigg|  \\
\leq & \bigg| \int_{D(R)} \frac{1}{\vert x-y \vert^{d +\sigma}} \biggl(\frac{\varepsilon^d \omega_d}{C^d}\biggr)^2 d \mu_{\varepsilon} \otimes \mu_{\varepsilon} \bigg| \label{grigiotto1} \\ 
& + \bigg| \int_{D(R)} \frac{1}{\vert x-y \vert^{d + \sigma}} d \hat{\mu}_{\varepsilon} \otimes \hat{\mu}_{\varepsilon} \bigg| \label{grigiotto2}\\
& + \bigg| \int_{\mathbb{R}^{2d} \setminus D(R)} \frac{1}{\vert x-y \vert^{d +\sigma}} \bigg(\frac{\varepsilon^d \omega_d}{C^d}\bigg)^2 d \mu_{\varepsilon} \otimes \mu_{\varepsilon} 
\label{grigiotto3}
\\
\nonumber  &  \phantom{\qquad \qquad} - \int_{\mathbb{R}^{2d} \setminus D(R)} \frac{1}{\vert x-y \vert^{d +\sigma}} d \hat{\mu}_{\varepsilon} \otimes \hat{\mu}_{\varepsilon} \bigg|. 
\end{align}
By  \eqref{grigiotto} we deduce that the quantiy in  \eqref{grigiotto1} is, up to a prefactor,  less than or equal to the quantity in \eqref{grigiotto2}, which, as proved in {\it Step 1}, tends to zero as 
$R\to 0$, uniformly with respect to $\e$. 
Finally, since  $ \frac{1}{\vert x-y \vert^{d+\sigma}}$ is continuous and bounded in $\R^{2d} \setminus D(R)$,  by Lemma \ref{lediti} we easily deduce that, for  any  fixed $R>0$,  the quantity in \eqref{grigiotto3} 
 tends to zero as  $\e\to 0$.  This concludes the proof of {\it Step 2.}
 
 The proof of the claim is clearly a consequence of {\it Step 1} and {\it Step 2}.

\end{proof}

We now introduce the candidate $\Gamma$-limit $\mathcal{F}^{\sigma}: \mathcal{M}_{b}(\mathbb{R}^d) \rightarrow \mathbb{R} \cup \{ +\infty\}$ 
defined by 
\begin{equation*}
\mathcal{F}^{\sigma}(\mu):=
\left\{
\begin{aligned}
& \int_{\mathbb{R}^d} \int_{\mathbb{R}^d}-\frac{1}{\vert x-y \vert^{d+\sigma}} d \mu \otimes \mu   & \text{if $\mu \leq \mathcal{L}^d$,} \\
& + \infty  & \text{elsewhere.} 
\end{aligned}
\right.
\end{equation*}
%Another consequence of precedent's lemma and precedent proposition is

\begin{theorem}\label{Gammaconvprimocaso}
	Let $ \sigma \in (-d,0)$. The following $\Gamma$-convergence result holds true.
	\begin{enumerate}
		\item ($\Gamma$-liminf inequality) For every $\rho \in \mathrm{L}^1(\mathbb{R}^d,[0,1])$ and for every sequence $\{ \mu_{\varepsilon} \}_{\varepsilon \in (0,1)} \subset \mathcal{M}_{b}(\mathbb{R}^d)$ with $\frac{\varepsilon^d \omega_d}{C^d} \mu_{\varepsilon} \rightarrow \rho \mathcal{L}^d$ tightly in $\mathcal{M}_b(\mathbb{R}^d)$ it holds
		\begin{equation*}
		\mathcal{F}^{\sigma}(\rho\mathcal{L}^d) \leq \liminf_{\varepsilon \rightarrow 0^+} \mathcal{F}_{\varepsilon}^{\sigma}(\mu_{\varepsilon}).
		\end{equation*}
		\item ($\Gamma$-limsup inequality) For every $\rho \in \mathrm{L}^1(\mathbb{R}^d,[0,1])$, there exists a sequence $\{\mu_{\varepsilon}\}_{\varepsilon \in (0,1)} \subset \mathcal{M}_{b}(\mathbb{R}^d)$ such that $\frac{\varepsilon^d \omega_d}{C^d} \mu_{\varepsilon} \rightarrow \rho \mathcal{L}^d$ tightly in $\mathcal{M}_{b}(\mathbb{R}^d)$ and
		\begin{equation*}
		\mathcal{F}^{\sigma}(\rho \mathcal{L}^d) \geq \limsup_{\varepsilon \rightarrow 0^+} \mathcal{F}_{\varepsilon}^{\sigma}(\mu_{\varepsilon}). 
		\end{equation*}
	\end{enumerate}
\end{theorem}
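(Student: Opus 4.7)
The plan is to deduce both inequalities as almost immediate consequences of Proposition \ref{propco}, complemented by Lemmas \ref{dime} and \ref{LemmacostruzF}. The key observation is that, for $\mu \in \mathcal{EM}_{\varepsilon}$,
$$
\mathcal{F}_{\varepsilon}^{\sigma}(\mu) \;=\; -\Bigl(\tfrac{\varepsilon^d \omega_d}{C^d}\Bigr)^{2} \sum_{i \neq j} \frac{1}{|x_i - x_j|^{d+\sigma}},
$$
which, up to the diagonal $\{x = y\}$, is the integral of the kernel $-h(x,y) = -|x-y|^{-(d+\sigma)}$ against $\bigl(\tfrac{\varepsilon^d \omega_d}{C^d}\bigr)^{2} \mu_{\varepsilon} \otimes \mu_{\varepsilon}$. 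Since the limit product measure $\rho\mathcal{L}^d \otimes \rho\mathcal{L}^d$ gives zero mass to the diagonal, Proposition \ref{propco} is precisely the tool to identify the limit of $\mathcal{F}_{\varepsilon}^{\sigma}(\mu_{\varepsilon})$ with $\mathcal{F}^{\sigma}(\rho \mathcal{L}^d)$.

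For the $\Gamma$-liminf inequality, I would set $L := \liminf_{\varepsilon \to 0^+} \mathcal{F}_{\varepsilon}^{\sigma}(\mu_{\varepsilon})$ and assume $L < +\infty$ (otherwise nothing to prove). Extracting a (not relabelled) subsequence realising $L$, the finiteness of the energy forces $\mu_{\varepsilon} \in \mathcal{EM}_{\varepsilon}$ along it, and the tight convergence hypothesis is inherited. Since by assumption $\rho \le 1$ a.e.\ (as is also automatic from Lemma \ref{dime}), Proposition \ref{propco} yields
$$
\lim_{\varepsilon \to 0^+} \mathcal{F}_{\varepsilon}^{\sigma}(\mu_{\varepsilon}) \;=\; -\int_{\R^d}\!\int_{\R^d} \frac{\rho(x)\rho(y)}{|x-y|^{d+\sigma}}\, dx\, dy \;=\; \mathcal{F}^{\sigma}(\rho\mathcal{L}^d)
$$
along this subsequence, so $L = \mathcal{F}^{\sigma}(\rho\mathcal{L}^d)$ and the inequality follows (in fact with equality).

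For the $\Gamma$-limsup inequality, I would take the recovery sequence directly from Lemma \ref{LemmacostruzF}: given $\rho \in L^1(\R^d, [0,1])$, there exists $\{\mu_{\varepsilon}\}_{\varepsilon \in (0,1)}$ with $\mu_{\varepsilon} \in \mathcal{EM}_{\varepsilon}$ and $\tfrac{\varepsilon^d \omega_d}{C^d}\mu_{\varepsilon} \to \rho\mathcal{L}^d$ tightly. A second application of Proposition \ref{propco} then converts this tight convergence into convergence of the energies, delivering the desired upper bound.

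The serious analytic work — uniform control of the near-diagonal singularity of the Riesz kernel against empirical measures — has already been performed in the proof of Proposition \ref{propco}. At the level of Theorem \ref{Gammaconvprimocaso} the only subtlety worth a remark is the treatment of the diagonal $\{x = y\}$, which is harmless because it is excluded from the discrete sum (by $i \neq j$) and is $\rho\mathcal{L}^d \otimes \rho\mathcal{L}^d$-null in the limit. No essential obstacle is therefore expected in this final step.
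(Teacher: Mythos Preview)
Your proposal is correct and follows exactly the same route as the paper: the $\Gamma$-liminf is obtained directly from Proposition \ref{propco} (after noting that finite energy forces $\mu_\varepsilon\in\mathcal{EM}_\varepsilon$), and the $\Gamma$-limsup by combining Lemma \ref{LemmacostruzF} with a second application of Proposition \ref{propco}. The paper's own proof is in fact just one sentence to this effect; your write-up simply spells out the details.
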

\begin{proof}
The  $\Gamma$-liminf inequality is a direct consequence of Proposition \ref{propco} 
while  the $\Gamma$-limsup inequaility is a direct consequence of Lemma \ref{LemmacostruzF} and again of Proposition \ref{propco}.
\end{proof}

Now we introduce the $\Gamma$-limit 
$\mathcal{T}^{\sigma}: \mathcal{M}_{b}(\mathbb{R}^d) \rightarrow \overline{\mathbb{R}} $
of the functionals $\mathcal{T}_{\varepsilon}^{\sigma}$
introduced in \eqref{Funzmoddiscret},
 defined by
\begin{equation*}
\mathcal{T}^{\sigma}(\mu):=
\left\{
\begin{aligned}
& \mathcal{F}^{\sigma}(\mu)+ \int_{\mathbb{R}^d} g(x) d \mu(x)  & \text{if $\mu \leq \mathcal{L}^d$,} \\
& + \infty  & \text{elsewhere.} 
\end{aligned}
\right.
\end{equation*}
%where $C>0$ is the same constant of Theorem \ref{compatezzaforzantegenericathm}.

\begin{theorem}\label{gcsc}
	Let $\sigma \in (-d,0)$, let $g\in\mathcal C^0(\R^d)$ satisfying   $g(x)\ge 0 $
	for $|x|$ large enough, and let $\mathcal T^\sigma_\e$ be defined in \eqref{Funzmoddiscret}. The following $\Gamma$-convergence result holds true.
	\begin{enumerate}
		\item ($\Gamma$-liminf inequality) For every $\rho \in \mathrm{L}^1(\mathbb{R}^d,[0,1])$ and for every sequence $\{\mu_{\varepsilon}\}_{\varepsilon \in (0,1)} \subset \mathcal{M}_{b}(\mathbb{R}^d)$ with $\frac{\varepsilon^d \omega_d}{C^d}\mu_{\varepsilon} \rightarrow \rho \mathcal{L}^d$ tightly in $\mathcal{M}_{b}(\mathbb{R}^d)$ it holds 
		\begin{equation*}
		\mathcal{T}^{\sigma}(\rho \mathcal{L}^d) \leq \liminf_{\varepsilon \rightarrow 0^+} \mathcal{T}_{\varepsilon}^{\sigma}(\mu_{\varepsilon}). 
		\end{equation*}
		\item ($\Gamma$-limsup inequality) For every $\rho \in \mathrm{L}^1(\mathbb{R}^d,[0,1])$ there exists a sequence $\{\mu_{\varepsilon}\}_{\varepsilon \in (0,1)}$ such that $ \frac{\varepsilon^d \omega_d}{C^d}\mu_{\varepsilon} \rightarrow \rho \mathcal{L}^d$ tightly in $\mathcal{M}_{b}(\mathbb{R}^d)$ and
		\begin{equation*}
		\mathcal{T}^{\sigma}(\rho\mathcal{L}^d) \geq \limsup_{\varepsilon \rightarrow 0^+} \mathcal{T}_{\varepsilon}^{\sigma}(\mu_{\varepsilon}).
		\end{equation*}
		\end{enumerate}
\end{theorem}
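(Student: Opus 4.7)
The strategy is to decompose $\mathcal{T}_\varepsilon^\sigma=\mathcal{F}_\varepsilon^\sigma+\mathcal{G}_\varepsilon^\sigma$ and handle the two pieces independently, using Theorem \ref{Gammaconvprimocaso} for the nonlocal Riesz part and exploiting the (semi)continuity of the linear confining term $\mathcal{G}_\varepsilon^\sigma(\mu)=\int_{\mathbb{R}^d} g\,d(\frac{\varepsilon^d\omega_d}{C^d}\mu)$ along tightly converging renormalised empirical measures.

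For the $\Gamma$-liminf inequality, given $\frac{\varepsilon^d\omega_d}{C^d}\mu_\varepsilon\to\rho\mathcal{L}^d$ tightly, Theorem \ref{Gammaconvprimocaso}(1) already supplies $\liminf_\varepsilon\mathcal{F}_\varepsilon^\sigma(\mu_\varepsilon)\ge\mathcal{F}^\sigma(\rho\mathcal{L}^d)$, so it is enough to show that $\liminf_\varepsilon\mathcal{G}_\varepsilon^\sigma(\mu_\varepsilon)\ge\int g\rho\,d\mathcal{L}^d$. I would split $g=g_+-g_-$, where (using $g\ge 0$ outside a large ball $B_R$) $g_-$ is bounded, continuous and supported in $\bar B_R$, while $g_+$ is continuous and non-negative. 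Tight convergence of $\frac{\varepsilon^d\omega_d}{C^d}\mu_\varepsilon$ immediately gives $\int g_-\,d(\frac{\varepsilon^d\omega_d}{C^d}\mu_\varepsilon)\to\int g_-\rho\,d\mathcal{L}^d$. For the (possibly unbounded) non-negative part $g_+$, I would use a Portmanteau-style lower bound: for each fixed $k$ the truncation $g_+\wedge k$ is bounded and continuous, so tight convergence yields $\int (g_+\wedge k)\,d(\frac{\varepsilon^d\omega_d}{C^d}\mu_\varepsilon)\to\int (g_+\wedge k)\rho\,d\mathcal{L}^d$; since $\int g_+\,d\nu\ge\int (g_+\wedge k)\,d\nu$ for every non-negative measure $\nu$, sending $k\to\infty$ and invoking monotone convergence on the right-hand side gives $\liminf_\varepsilon\int g_+\,d(\frac{\varepsilon^d\omega_d}{C^d}\mu_\varepsilon)\ge\int g_+\rho\,d\mathcal{L}^d$.

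For the $\Gamma$-limsup inequality the case $\mathcal{T}^\sigma(\rho\mathcal{L}^d)=+\infty$ is vacuous; assume then that $\int g\rho\,d\mathcal{L}^d<+\infty$ and $\mathcal{F}^\sigma(\rho\mathcal{L}^d)<+\infty$. I would first treat $\rho=a\chi_A$ with $A$ open and bounded: Lemma \ref{LemmacostruzF} furnishes a recovery sequence $\mu_\varepsilon$ supported in $A$, where $g$ is bounded and continuous; Proposition \ref{propco} delivers $\mathcal{F}_\varepsilon^\sigma(\mu_\varepsilon)\to\mathcal{F}^\sigma(\rho\mathcal{L}^d)$, while tight convergence combined with the uniform boundedness of $g|_{\bar A}$ gives $\mathcal{G}_\varepsilon^\sigma(\mu_\varepsilon)\to\int g\rho\,d\mathcal{L}^d$. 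Finite convex combinations of such simple functions follow by superposition (taking disjoint supports). For an arbitrary $\rho\in L^1(\mathbb{R}^d;[0,1])$ of finite $\mathcal{T}^\sigma$-energy I would approximate $\rho$ monotonically by $\rho_n:=\rho\chi_{B_n}$ and each $\rho_n$ in turn by simple functions $\rho_{n,k}$ with $\rho_{n,k}\to\rho_n$ in $L^1$ and $\int g\rho_{n,k}\,d\mathcal{L}^d\to\int g\rho_n\,d\mathcal{L}^d$ (dominated convergence, using $\rho\le 1$ on bounded sets together with $\int g\rho<+\infty$); a standard diagonal extraction then selects indices $n(\varepsilon), k(\varepsilon)\to\infty$ producing $\mu_\varepsilon$ with the required tight convergence and $\limsup_\varepsilon\mathcal{T}_\varepsilon^\sigma(\mu_\varepsilon)\le\mathcal{T}^\sigma(\rho\mathcal{L}^d)$.

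The main obstacle I foresee is precisely the $\Gamma$-limsup for $\rho$ with unbounded support or unbounded $g$: a naive recovery sequence based on Lemma \ref{LemmacostruzF} would either fail tight convergence or let $\mathcal{G}_\varepsilon^\sigma$ overshoot $\int g\rho\,d\mathcal{L}^d$. The truncation-plus-diagonal procedure circumvents this by exploiting the finiteness of $\int g\rho\,d\mathcal{L}^d$ to control the tail at infinity, while the continuity of $g$ on each compact truncation set allows passage to the limit in the confining term. The interaction part $\mathcal{F}^\sigma_\varepsilon$ is already under control thanks to the integrability of the kernel $|x-y|^{-(d+\sigma)}$ for $\sigma<0$ proved in Proposition \ref{propco}.
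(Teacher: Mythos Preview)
Your proposal is correct and follows essentially the same approach as the paper: decompose $\mathcal{T}_\varepsilon^\sigma=\mathcal{F}_\varepsilon^\sigma+\mathcal{G}_\varepsilon^\sigma$, invoke Theorem \ref{Gammaconvprimocaso} for the Riesz part, use lower semicontinuity of $\mu\mapsto\int g\,d\mu$ under tight convergence for the liminf, and for the limsup build recovery sequences first for compactly supported densities (where $g$ is bounded) and then reach general $\rho$ by truncation $\rho\chi_{B_R}$ and a diagonal argument. The only cosmetic differences are that the paper cites \cite[Proposition 1.62]{AmbFuscPall} for the lower semicontinuity (you spell out the $g_+/g_-$ truncation argument explicitly), and the paper takes $C^0_c$ functions as the base case for the limsup rather than simple functions $a\chi_A$; neither change affects the substance of the argument.
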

\begin{proof}
We start by proving (1). It is easy to prove (see \cite[Proposition 1.62]{AmbFuscPall})  that the term 
$\int_{\R^d} g(x) d\mu$ is lower semicontinuous 
with respect to tight convergence. Then, 
by Theorem \ref{Gammaconvprimocaso} we obtain that 
%for all $f \in \mathrm{L}^{1}(\mathbb{R}^d,[0,1])$ and for all $\{ \mu_{\varepsilon}\}_{\varepsilon \in (0,1)} \subset \mathcal{M}_{b}(\mathbb{R}^d)$ such that $\frac{\varepsilon^d \omega_d}{C^d}\mu_{\varepsilon} \rightarrow f \mathcal{L}^d$ tight in $\mathcal{M}_{b}(\mathbb{R}^d)$
\begin{equation*}
\begin{split}
& \mathcal{T}^{\sigma}(\rho \mathcal{L}^d) =  \mathcal{F}^{\sigma}(\rho \mathcal{L}^d)+ \int_{\mathbb{R}^d} g(x) \rho(x) dx  \\
\leq & \liminf_{\varepsilon \rightarrow 0^+} \mathcal{F}_{\varepsilon}^{\sigma}(\mu_{\varepsilon})+ \liminf_{\varepsilon \rightarrow 0^+}  \int_{\mathbb{R}^d} g(x) \frac{\varepsilon^d \omega_d}{C^d} d \mu_{\varepsilon}(x)  \\
\leq & \liminf_{\varepsilon \rightarrow 0^+} \Big( \mathcal{F}_{\varepsilon}^{\sigma}(\mu_{\varepsilon})+  \int_{\mathbb{R}^d} g(x) \frac{\varepsilon^d \omega_d}{C^d} d \mu_{\varepsilon}(x)\Big) = \liminf_{\varepsilon \rightarrow 0^+} \mathcal{T}_{\varepsilon}^{\sigma}(\mu_{\varepsilon}).
\end{split}
\end{equation*}

We now prove (2). 
First consider the case $\rho\in C^0_c(\R^d)$. 
Let $R>0 $ be such that $\spt (\rho) \subset B_R$  and let $\{\mu_\e\}$ be  the recovery sequence provided by Theorem \ref{Gammaconvprimocaso};
then, it is easy to see that $\{\mu_\e \chi_{B_R}\}$ provides a recovery sequence  also
for the functionals  $\mathcal{T}^{\sigma}_\e$.
The general case follows by a standard diagonalization argument. 
Indeed, 
for any sequence $\{\f_n\}\subset C^0(\R^d;[0,1])$ converging to $\f$ in $L^1$ we have 
$\mathcal{F}^{\sigma}(\f_n\mathcal{L}^d)\to
\mathcal{F}^{\sigma}(\f\mathcal{L}^d)$ (see for instance the proof of Proposition \ref{propco}). Then,
for any sequence $\{\rho_n\}\subset C^0_c(B_R;[0,1])$ converging to $\rho\chi_{B_r}$ in $L^1$ we have 
$\mathcal{T}^{\sigma}(\rho_n\mathcal{L}^d)\to
\mathcal{T}^{\sigma}(\rho \chi_{B_R}\mathcal{L}^d)$.
Moreover, since $\rho$ is nonnegative and $g(x)$ is positive for $|x|$ large enough, we have that
$$
\int_{\mathbb{R}^d} g(x)  \rho(x) \chi_{B_R} (x) dx
\to 
\int_{\mathbb{R}^d} g(x)  \rho(x)  (x) dx
$$ 
as $R\to +\infty$. We deduce that 
$\mathcal{T}^{\sigma}(\rho\chi_{B_R}\mathcal{L}^d)\to \mathcal{T}^{\sigma}(\rho\mathcal{L}^d)$
as $R\to +\infty$.
Therefore, there exists 
a sequence $\{\rho_m\}_{m\in\N} \subset  \mathcal C^0_c(\R^d)$ such that $\rho_m\to \rho$ in $L^1(\R^d)$  and
%, recalling also Proposition \ref{propco}, 
$\mathcal{T}^{\sigma}(\rho_m\mathcal{L}^d)\to 
\mathcal{T}^{\sigma}(\rho\mathcal{L}^d)$
as $m\to +\infty$.

\end{proof}

\subsection{Asymptotic behaviour of minimizers}

Here we analyze the asymptotic behaviour of minimizers of the functionals ${\mathcal{T}}_{\e}^{\sigma}$
defined in \eqref{Funzmoddiscret}.
%Given $\rho\in L^1(\R^d,[0,1])$,
%let $e_\rho:\R\to\R$ be the energy per particle defined by 

%Notice that ${\mathcal{T}}^{\sigma}(\rho \mathcal{L}^d
%)=\int_{\R^d} \rho(x) e_{\rho}(x) \, dx$.
\begin{proposition}[First variation]\label{pfv}
	Let $\rho  \mathcal{L}^d$ be a minimizer of ${\mathcal{T}}^{\sigma}$.
	For almost every $x\in\R^d$ such that  $0<\rho(x)<1$
	we have 
	$$
	 g(x)  -  2 \int_{\mathbb{R}^{d}}  \frac{1}{\vert x-y \vert^{d+\sigma}}    \rho(y) \, dy  =0 \, .
	$$
\end{proposition}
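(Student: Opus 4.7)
The plan is to perform a direct inner variation of $\mathcal{T}^\sigma$ at the minimizer, exploiting the fact that on the open (in value) set $\{0<\rho<1\}$ we have room to perturb $\rho$ in both directions without violating the pointwise constraint $0\le \rho\le 1$ that is built into the effective domain of $\mathcal{T}^\sigma$.

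For any $\delta\in(0,1/2)$, let $A_\delta:=\{x\in\R^d:\delta<\rho(x)<1-\delta\}$. Fix $\eta\in L^\infty(\R^d)$ with $\spt\eta\subset A_\delta\cap B_R(0)$ for some $R>0$ and $\|\eta\|_{L^\infty}\le 1$. Then for every $t$ with $|t|<\delta$, the competitor $\rho_t:=\rho+t\eta$ still satisfies $0\le \rho_t\le 1$ a.e., so $\rho_t\mathcal{L}^d$ lies in the domain of $\mathcal{T}^\sigma$. Expanding,
\begin{equation*}
\mathcal{T}^\sigma(\rho_t\mathcal{L}^d)=\mathcal{T}^\sigma(\rho\mathcal{L}^d)+t\Bigl(-2\int_{\R^d}\int_{\R^d}\frac{\rho(y)\eta(x)}{|x-y|^{d+\sigma}}\,dx\,dy+\int_{\R^d}g(x)\eta(x)\,dx\Bigr)+t^2 R(\eta),
\end{equation*}
where $R(\eta)=-\int\int |x-y|^{-(d+\sigma)}\eta(x)\eta(y)\,dx\,dy$ is finite because $\eta$ has bounded support and $d+\sigma\in(0,d)$ makes the kernel locally integrable.

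Since $\rho\mathcal{L}^d$ is a minimizer, the coefficient of $t$ vanishes, yielding
\begin{equation*}
\int_{\R^d}\eta(x)\Bigl[g(x)-2\int_{\R^d}\frac{\rho(y)}{|x-y|^{d+\sigma}}\,dy\Bigr]\,dx=0
\end{equation*}
for every admissible $\eta$. Here I would record that the inner integral is finite for a.e.\ $x\in\R^d$: the singular part near $y=x$ is controlled by $\|\rho\|_{L^\infty}\le 1$ together with local integrability of $|\cdot|^{-(d+\sigma)}$, while the tail contribution is dominated by $\|\rho\|_{L^1}$ since $|x-y|^{-(d+\sigma)}\le 1$ outside the unit ball; membership $\rho\in L^1$ follows from the coercive hypothesis $g(x)\ge 0$ for $|x|$ large combined with $\mathcal{T}^\sigma(\rho\mathcal{L}^d)<+\infty$ and the bound $\rho\le 1$.

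By the arbitrariness of $\eta\in L^\infty(A_\delta\cap B_R(0);[-1,1])$ and the fundamental lemma of the calculus of variations, the bracketed quantity vanishes for a.e.\ $x\in A_\delta\cap B_R(0)$. Letting $R\to+\infty$ and $\delta\to 0^+$ along a countable sequence exhausts $\{0<\rho<1\}$ up to a null set, and the claimed identity follows. The only delicate point, which I would verify explicitly, is the a.e.\ finiteness of the Riesz potential $\rho*|\cdot|^{-(d+\sigma)}$ used above; everything else is a straightforward two-sided perturbation argument.
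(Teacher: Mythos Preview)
Your proof is correct and follows essentially the same route as the paper's: restrict to a level set $\{\delta<\rho<1-\delta\}$ (the paper writes $E_{\alpha,\beta}$), perform a two-sided perturbation $\rho+t\eta$ that stays admissible, expand $\mathcal{T}^\sigma$ to first order in $t$, and invoke the fundamental lemma. The only cosmetic differences are that the paper tests against characteristic functions $u=\chi_E$ with $E\subset E_{\alpha,\beta}$ rather than general bounded $\eta$, and that you are more explicit about the finiteness of the quadratic remainder and of the Riesz potential $\rho*|\cdot|^{-(d+\sigma)}$; note also that $\rho\in L^1$ is immediate from $\rho\mathcal{L}^d\in\mathcal{M}_b(\mathbb{R}^d)$, so your coercivity justification, while correct, is not needed.
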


\begin{proof}
	Let $h(x,y):= \vert x-y \vert^{-d-\sigma}$.
	Let $0<\alpha<\beta<1$ and  set
	$$
	E_{\alpha,\beta}:= \{x\in\R^d: \, \alpha < \rho(x) <\beta\}. 
	$$
		Let $E\subseteq E_{\alpha,\beta}$, and set $u:=\chi_E$. Then, for $\e$ small enough the function $\rho + \e u$ takes values in $(0,1)$. By minimality of $\rho$ we deduce that
	\begin{multline*}
	0\le {\mathcal{T}}^{\sigma}(\rho+\e u) - 	{\mathcal{T}}^{\sigma}(\rho) 
	\\
	=  \e \int_{\R^d}  g(x) u(x) \, dx -2 
	\e \int_{\mathbb{R}^{2d}}  h(x,y)     \rho(y) u(x) \, dy \, dx + o(\e),
	\end{multline*}
where $o(\e)/\e \to 0 $ as $\e\to 0$.  
We deduce that  
	\begin{equation}\label{fv}
	\int_{\R^d}  g(x) u(x) \, dx -2
	 \int_{\mathbb{R}^{2d}}  h(x,y)   \rho(y) u(x) \, dy \, dx = 0 \, .
	\end{equation}
Since the above inequality holds for  $u=\chi_E$ where $E$ is any measurable set contained in $\{x\in\R^d: 0<\rho(x)<1\}$, by the fundamental lemma in the calculus of variations and an easy density argument we deduce the claim.
	
%	secondo me si puo' omettere
%	
%	
%	Let $\bar x\in\R^d\setminus \{0\}$  be a Lebesgue point for $\rho$ with Lebesgue value $\lambda\in (0,1)$. Fix $0<\alpha<\lambda<\beta<1$, and for $r>0$ let $E_r:=E_{\alpha,\beta} \cap B_r(\bar x)$, $u_r:= \chi_{E_r}$.  By \eqref{fv} with $u$ replaced by $u_r$ we deduce that	
%	\begin{multline*}
%	0= \int_{\R^d}  g(x) u_r(x) \, dx -2 
%	\int_{\mathbb{R}^{2d}}  h(x,y)    \rho(y) u_r(x) \, dy \, dx 
%	\\
%	= r^d \omega_d 
%	\left[ 
%	 g(\bar x) -2   \int_{\mathbb{R}^{d}}  h(\bar x,y)   \rho(y)  dy 
%	\right] + o(r^d),
%		\end{multline*}
%	 where  $o(r^d)/r^d\to 0 $ as $r\to 0$. 
%	 We easily deduce the claim.
%	 fine del secondo me
	 \end{proof}

\begin{theorem}[Behaviour of minimizers]
	Let ${\mathcal{T}}_{\e}^{\sigma}$ be defined in \eqref{Funzmoddiscret} with $g$ satisfying \eqref{defg} for some $C_2> C^*(\sigma,d)$, where 
	$C^*(\sigma,d)$ is the constant provided by Theorem \ref{teco}.
	Let moreover $\mu_\e$ be minimizers of ${\mathcal{T}}_{\e}^{\sigma}$ for all $\e>0$. 
	
	Then, up to a subsequence,  $\frac{\varepsilon^d \omega_d}{C^d} \mu_{\varepsilon} \rightarrow \chi_E  \mathcal{L}^d$ tightly in $\mathcal{M}_{b}(\mathbb{R}^d)$, for some set $E\in \mathcal M_f(\R^d)$. Moreover,
	$ \chi_E  \mathcal{L}^d$ is a minimizer of	${\mathcal{T}}^{\sigma}$.
	Finally, if $g(x):=G(|x|)$ for some increasing function $G:\R^+ \to \R$, then $E$ is a ball.  
\end{theorem}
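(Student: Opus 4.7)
The plan is to combine the compactness of Theorem \ref{teco} with the $\Gamma$-convergence of Theorem \ref{gcsc} to obtain that $\rho\mathcal{L}^d:=\lim_\varepsilon \frac{\varepsilon^d\omega_d}{C^d}\mu_\varepsilon$ minimizes $\mathcal{T}^\sigma$, then to use the Euler--Lagrange equation of Proposition \ref{pfv} together with a second variation argument to show $\rho\in\{0,1\}$ a.e., and finally to invoke the Riesz rearrangement inequality and the bathtub principle to identify the limit set as a ball in the radial case.

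Testing $\mu_\varepsilon$ against the trial configuration $\delta_0$ gives $\mathcal{T}_\varepsilon^\sigma(\mu_\varepsilon)\le\mathcal{T}_\varepsilon^\sigma(\delta_0)=g(0)\varepsilon^d\omega_d/C^d$, which is uniformly bounded for $\varepsilon\in(0,1)$. Hence by Theorem \ref{teco}, along a subsequence $\frac{\varepsilon^d\omega_d}{C^d}\mu_\varepsilon\to\rho\mathcal{L}^d$ tightly, with $\rho\in L^1(\mathbb{R}^d,[0,1])$. For any competitor $\rho'\in L^1(\mathbb{R}^d,[0,1])$, a recovery sequence $\mu'_\varepsilon$ produced by Theorem \ref{gcsc} together with the $\Gamma$-liminf inequality yields
\begin{equation*}
\mathcal{T}^\sigma(\rho\mathcal{L}^d)\le\liminf_{\varepsilon\to 0^+}\mathcal{T}_\varepsilon^\sigma(\mu_\varepsilon)\le\liminf_{\varepsilon\to 0^+}\mathcal{T}_\varepsilon^\sigma(\mu'_\varepsilon)=\mathcal{T}^\sigma(\rho'\mathcal{L}^d),
\end{equation*}
so $\rho\mathcal{L}^d$ is a minimizer of $\mathcal{T}^\sigma$.

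To show that $\rho$ is a characteristic function I argue by contradiction: if $|\{0<\rho<1\}|>0$, pick $\delta\in(0,1/2)$ and disjoint bounded sets $A,B\subset\{\delta<\rho<1-\delta\}$ with $|A|=|B|>0$. Setting $u:=\chi_A-\chi_B$, the density $\rho+tu$ lies in $L^1(\mathbb{R}^d,[0,1])$ for $|t|<\delta$, and since $\mathcal{F}^\sigma$ is quadratic in $\rho$ while the forcing term is linear, direct expansion gives
\begin{equation*}
\mathcal{T}^\sigma((\rho+tu)\mathcal{L}^d)-\mathcal{T}^\sigma(\rho\mathcal{L}^d)=t\Big(\int_{\mathbb{R}^d}g\,u\,dx-2\int_{\mathbb{R}^{2d}}\frac{\rho(y)u(x)}{|x-y|^{d+\sigma}}\,dx\,dy\Big)-t^2\int_{\mathbb{R}^{2d}}\frac{u(x)u(y)}{|x-y|^{d+\sigma}}\,dx\,dy.
\end{equation*}
The linear bracket vanishes by the Euler--Lagrange equation of Proposition \ref{pfv}, applied pointwise a.e.\ on the support of $u$. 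Since $d+\sigma\in(0,d)$, the Riesz kernel $|x-y|^{-d-\sigma}$ is strictly positive definite, so the $t^2$ coefficient is strictly negative for the nontrivial, compactly supported $u$; this makes $t=0$ a strict local maximum along this direction and contradicts the minimality of $\rho\mathcal{L}^d$. Hence $\rho=\chi_E$ for some $E\in\mathcal{M}_f(\mathbb{R}^d)$.

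In the radial case, let $E^*=B_r(0)$ with $|E^*|=|E|$. Since $r^{-d-\sigma}$ is strictly symmetric decreasing, the Riesz rearrangement inequality gives $\mathcal{F}^\sigma(\chi_E\mathcal{L}^d)\ge\mathcal{F}^\sigma(\chi_{E^*}\mathcal{L}^d)$, with equality only when $E$ is a translate of $E^*$, while the bathtub principle applied to the symmetric increasing weight $g$ yields $\int_E g\,dx\ge\int_{E^*}g\,dx$. Summing these two inequalities, $\mathcal{T}^\sigma(\chi_E\mathcal{L}^d)\ge\mathcal{T}^\sigma(\chi_{E^*}\mathcal{L}^d)$, and minimality of $\chi_E\mathcal{L}^d$ forces equality in the Riesz step, so $E$ is a ball. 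The main obstacle is the middle step: Proposition \ref{pfv} by itself only constrains $\rho$ on the free region $\{0<\rho<1\}$, and ruling out a nontrivial free region requires exploiting the quadratic structure of $\mathcal{F}^\sigma$ together with the strict positive definiteness of Riesz kernels of exponent in $(0,d)$.
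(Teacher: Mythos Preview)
Your proof is correct and follows the same overall scheme as the paper: compactness via Theorem \ref{teco}, minimality of the limit via the $\Gamma$-convergence of Theorem \ref{gcsc}, the first variation of Proposition \ref{pfv} to kill the linear term, and Riesz rearrangement for the radial case.

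The one substantive difference is in how you rule out the free region $\{0<\rho<1\}$. You take a sign-changing perturbation $u=\chi_A-\chi_B$ and invoke the \emph{strict positive definiteness} of the Riesz kernel $|z|^{-(d+\sigma)}$ (valid because $d+\sigma\in(0,d)$, so its Fourier transform is a positive multiple of $|\xi|^{\sigma}$) to make the quadratic term strictly negative. The paper instead takes the non-negative perturbation $u:=\chi_{\{\rho>0\}}-\rho$ and compares $\rho$ directly with $\tilde\rho:=\chi_{\{\rho>0\}}$: since $u\ge 0$ and the kernel $h$ is pointwise positive, the inequality $-\int h\,u\,u\le 0$ is immediate, and equality forces $u=0$ a.e. The paper's choice is thus more elementary --- it needs only the pointwise positivity of $h$, not any Fourier-analytic positive-definiteness fact --- while your route actually proves more, namely that $\mathcal{T}^\sigma$ is strictly concave along \emph{every} direction supported in the free region. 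Both arguments are valid; note also that your condition $|A|=|B|$ is harmless but unnecessary, since Proposition \ref{pfv} already annihilates the linear term without any mean-zero constraint on $u$.
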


\begin{proof}
	By Theorem \ref{teco}, up to a subsequence,  $\frac{\varepsilon^d \omega_d}{C^d} \mu_{\varepsilon} \rightarrow \rho  \mathcal{L}^d$ tightly in $\mathcal{M}_{b}(\mathbb{R}^d)$, for some  $\rho\in L^1(\R^d;[0,1])$. Moreover, as a consequence of the $\Gamma$-convergence result established in Theorem \ref{gcsc},  $\rho \mathcal{L}^d$ is a minimizer of ${\mathcal{T}}^{\sigma}$; we have to prove that $\rho$ is a characteristic function.

	Let now $\tilde \rho:= \chi_{\spt(\rho)}$ and let $u:= \tilde \rho - \rho$. 
By \eqref{fv} we have
	\begin{multline*}
		0\le {\mathcal{T}}^{\sigma}(\rho + u) - 	{\mathcal{T}}^{\sigma}(\rho)
		\\
		=
		\int_{\R^d}  g(x) u(x) \, dx -2
		\int_{\mathbb{R}^{2d}}  h(x,y)   \rho(y) u(x) \, dy \, dx - \int_{\R^{2d}} h(x,y) u(y) u(x) \,dy \, dx 
		\\
		=   
		- \int_{\R^{2d}} h(x,y) u(y) u(x) \,dy \, dx \le 0.
		\end{multline*}
We conlcude that the above inequalities are in fact all equalities, which in turns implies $u=0$, i.e., $\tilde \rho = \rho$ and $\rho$ is  a characteristic function.
	
	Finally, if $g$ is radial and increasing with respect to $|x|$, then denoted by $E^*$ the ball centered at $0$ with $|E^*|=|E|$, we have  
	\begin{equation}\label{uniba}
	\mathcal{F}^{\sigma}(E^* \mathcal L^d) <
\mathcal{F}^{\sigma}(E \mathcal L^d), 
\qquad 
\int_{E^*} g(x) d x
\le \int_{E} g(x) d x,	
\end{equation}
where the first (strict) inequality is a consequence of the uniqueness of the ball in the Riesz inequality for characteristic functions interacting through strict increasing potentials (see for instance \cite[Theorem A4]{DeLucaNovPons}).  From \eqref{uniba} we easily conclude that $E$ must be a ball. 
\end{proof}

\section{Riesz interactions for $\sigma\in [0,1)$}
Here we introduce and analyze regularized Riesz interaction functionals in the non-integrable case $\sigma\in [0,1)$.
\subsection{The energy functionals}\label{SezEnergie}

Let $\sigma\in[0,1)$. For every $\e>0$ let $r_\e>0$ be such that $r_\e\to 0$
as $\e\to 0$ and 
\begin{align}\label{condr}
& \frac{\e^{\frac{1}{2\sigma+1}}}{r_\e} \to 0 &\text{ as } \e\to 0 \qquad &\text{ for } \sigma \in (0,1);\\
& \frac{\e |\log (r_\e)|^2 }{r_\e } \to 0 &\text{ as } \e\to 0 \qquad &\text{ for } \sigma =0.
\end{align}
The regularized potentials are defined by
\begin{equation}
 f_{\varepsilon}^{\sigma}(r):=
\begin{cases}
 +\infty   & \text{ for } r \in [0,2 \varepsilon) \, , \\
 0 &  \text{ for } r \in [2 \varepsilon,r_{\varepsilon})\, , \\
 -\frac{1}{r^{d+\sigma}} &  \text{ for } r \in [r_{\varepsilon}, +\infty) \, , 
\end{cases}
\end{equation} 
As in \eqref{deffes}, we introduce the energy functionals
\begin{equation}\label{deffes2}
\mathcal{F}_{\varepsilon}^{\sigma}(\mu):=
\left\{
\begin{aligned}
& F_{\varepsilon}^{\sigma}(\mathcal{A}( \mu) ) & \text{if $ \mu  \in \mathcal{EM}_\e$,} \\
& +\infty & \text{elsewhere.} 
\end{aligned}
\right.
\end{equation}
We will also introduce suitable renormalized energy functionals. To this purpose, 
for all $\sigma \in [0,1)$ and $r \in (0,1]$ we set
\begin{equation}\label{cost,gamma,r,signa}
\gamma_{r}^{\sigma}:=-\int_{B_{1}(0)\setminus B_{r}(0)} \frac{1}{\vert z \vert^{d+\sigma}}dz.
\end{equation}
Notice that
\begin{eqnarray}\label{sgammaR}
\gamma^{\sigma}_r:=\left\{\begin{array}{ll}
\displaystyle d\omega_d\frac{1-r^{-\sigma}}{\sigma}&\textrm{if }\sigma\neq 0\,,\\ 
\displaystyle d\omega_d\log r&\textrm{if }\sigma=0\, .
\end{array}
\right.
\end{eqnarray}
For $\sigma \in [0,1)$ the renormalized energy functionals  $\hat {\mathcal{F}}_{\varepsilon}^{\sigma} : \mathcal{M}_{b}(\mathbb{R}^{d}) \rightarrow \overline \R$ are defined by 
\begin{equation*}
\hat {\mathcal{F}}_{\varepsilon}^{\sigma}(\mu):=
\left\{
\begin{aligned}
& \mathcal F_{\varepsilon}^{\sigma}(\mathcal{A}( \mu) )-\gamma_{r_{\varepsilon}}^{\sigma} \frac{\varepsilon^{d} \omega_d}{C_{d}} \mu(\mathbb{R}^{d}) & \text{if $ \mu  \in \mathcal{EM}_\e$,} \\
& +\infty & \text{elsewhere.} 
\end{aligned}
\right.
\end{equation*} 
The functional $\hat {\mathcal{F}}_{\varepsilon}^{\sigma}$ may be also rewritten as 

\begin{equation*}
\hat {\mathcal{F}}_{\varepsilon}^{\sigma}(\mu) =
\left\{
\begin{aligned}
& \int_{\mathbb{R}^{d}} \int_{\mathbb{R}^{d}} f_{\varepsilon}^{\sigma}(\vert x-y \vert) \bigg( \frac{\varepsilon^{d} \omega_d}{C_{d}}\bigg)^{2}d \mu \otimes \mu-\gamma_{r_\varepsilon}^{\sigma} \frac{\varepsilon^{d} \omega_d}{C_{d}} \mu(\mathbb{R}^{d}) & \text{if $ \mu  \in \mathcal{EM}_\e$,} \\
& +\infty & \text{elsewhere.} 
\end{aligned}
\right.
\end{equation*}

\subsection{The continuous model}
Here we give a short overview of the $\Gamma$-convergence analysis of the continuous model for non-integrable Riesz potentials developed in \cite{DeLucaNovPons}.

First, we introduce the fractional perimeters; for all $\sigma \in (0,1)$, the $\sigma$-fractional perimeter of $E\in\M(\R^d)$ is defined by
\begin{equation*}
P^{\sigma}(E)= \int_{E} \int_{\mathbb{R}^d \setminus E} \frac{1}{\vert x-y \vert^{d+\sigma}}dx dy.
\end{equation*}
For $\sigma=0$, a notion of $0$-fractional perimeter has been introduced in \cite{DeLucaNovPons} as follows.

First, for all $R >1$ 
we set
\begin{equation}\label{cost,gamma,r,signa2}
\gamma_{R}^{0}:=\int_{B_{R}(0)\setminus B_1(0)} \frac{1}{\vert z \vert^{d}}dz.
\end{equation}
Then, the following definition is well posed (namely, the following limit exists, \cite{DeLucaNovPons})
\begin{equation*}%\label{sperR}
P^0(E):=\lim_{R\to + \infty} \int_{E} \int_{B_R(x)\setminus E} \frac{1}{|x-y|^{d}} \, dx \, dy \, -\gamma^0_R |E|.
\end{equation*}
Now, we introduce the continuous Riesz functionals. 
For all $r\in(0,1)$ let $J_{r}^{\sigma}: {\M}_{f}(\mathbb{R}^d) \rightarrow \overline{\mathbb{R}}$ be the functionals defined by 
\begin{equation*}
J_{r}^{\sigma}(E):= \int_{E} \int_{E \setminus B_{r}(x)} \frac{-1}{\vert x-y \vert^{d+\sigma}}dxdy.
\end{equation*}
The renormalized functionals $\hat{J}_{r}^{\sigma}: {\M}_{f}(\mathbb{R}^d) \rightarrow \overline{\mathbb{R}}$ are defined by
\begin{equation}\label{funzionaliriesznormalizati}
\hat{J}_{r}^{\sigma}(E):= J_{r}^{\sigma}(E) -\gamma_{r}^{\sigma}\vert E \vert,
\end{equation}
where $\gamma_{r}^{\sigma}$ is the constant defined in \eqref{cost,gamma,r,signa}.

Now we introduce the candidate $\Gamma$-limits. 
For $\sigma \in (0,1)$ we define the functional $\hat{\mathcal{F}}^{\sigma}: \mathcal{M}_{b}(\mathbb{R}^d) \rightarrow \overline{\mathbb{R}}$ as
\begin{equation}\label{desi}
\hat{\mathcal{F}}^{\sigma}(\mu):=
\left\{
\begin{aligned}
& P^{\sigma}(E) - \gamma^{\sigma} \vert E \vert  & \text{if $\mu = \chi_{E} \mathcal{L}^d$,} \\
& + \infty  & \text{elsewhere,} 
\end{aligned}
\right.
\end{equation}
where $\gamma^{\sigma}= \int_{\mathbb{R}^d \setminus B_1(0)} \frac{1}{\vert z \vert^{d +\sigma}} dz$.

Moreover, for $\sigma = 0$  we define $\mathcal{\mathcal{F}}^{0}: \mathcal{M}_{b}(\mathbb{R}^d) \rightarrow \overline{\mathbb{R}}$ as
\begin{equation}\label{desi0}
\hat{\mathcal{F}}^{0}(\mu):=
\left\{
\begin{aligned}
& P^{0}(E) & \text{if $\mu = \chi_{E} \mathcal{L}^d$,} \\
& + \infty  & \text{elsewhere.} 
\end{aligned}
\right.
\end{equation}
The following theorem has been proved in \cite{DeLucaNovPons}.

\begin{theorem}\label{col} 
	The following compacntess and $\Gamma$-convergence results hold. 
	\begin{itemize}
		\item[]{\bf Compactness:}
	Let $\sigma \in [0,1)$  and let $r_n\to 0^+$.
	Let $U\subset\R^d$ be an open bounded set and let $\{E_n\}_{n\in\N}\subset \M_f(\R^d)$ be such that $E_n\subset U$  for all $n\in\N$\,. Finally, let $C>0$\,.
	
	If $\jj_{r_n}^{\sigma}(E_n)\le C$  for all $n\in\N$,
	then, up to a subsequence, $\chi_{E_n} \to \chi_E$ in $L^1(\R^d)$ for some $E\in \M_f(\R^d)$.   
	\item[]{\bf  $\Gamma$-convergence:}
	The following $\Gamma$-convergence result holds true.
	\begin{itemize}
		\item[(i)] ($\Gamma$-liminf inequality) For every $E\in\M_f(\R^d)$ and for every sequence $\{E_n\}_{n\in\N}$ with $\chi_{E_n}\to\chi_{E}$ strongly in $L^1(\R^d)$ it holds
		\begin{equation*}
		\hat{\mathcal F}^{\sigma}(E)\le\liminf_{n\to +\infty}\jj_{ r_n}^{\sigma}(E_n)\,.
		\end{equation*}
		\item[(ii)] ($\Gamma$-limsup inequality) For every $E\in\M_f(\R^d)$\,, there exists a sequence $\{E_n\}_{n\in\N}$ such that $\chi_{E_n}\to\chi_{E}$ strongly in $L^1(\R^d)$ and
		\begin{equation*}
		\hat{\mathcal F}^{\sigma}(E)\ge \limsup_{n\to +\infty}\jj_{r_n}^{\sigma}(E_n)\,.
		\end{equation*}
	\end{itemize}  
\end{itemize}  \end{theorem}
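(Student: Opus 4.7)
Since Theorem \ref{col} is taken from \cite{DeLucaNovPons}, my plan mirrors the arguments there. The central algebraic tool is a decomposition that isolates the renormalization: using that $-\gamma_r^\sigma=\int_{B_1(0)\setminus B_r(0)}|z|^{-d-\sigma}\,dz$ and splitting the integration domains according to the sets $E\cap B_1(x)\setminus B_r(x)$ and their complements, one obtains for $\sigma\in(0,1)$ the identity
\[
\hat J_r^\sigma(E)=\int_E\int_{(B_1(x)\setminus E)\setminus B_r(x)}\frac{dy\,dx}{|x-y|^{d+\sigma}}-\int_E\int_{E\setminus B_1(x)}\frac{dy\,dx}{|x-y|^{d+\sigma}}.
\]
The second term is bounded by $|E|^2$ and continuous under $L^1$ convergence of characteristic functions; the first term is monotone nondecreasing as $r\to 0^+$ and, in the limit, produces the local part of $P^\sigma(E)-\gamma^\sigma|E|$, so that $\lim_{r\to 0^+}\hat J_r^\sigma(E)=\hat{\mathcal F}^\sigma(E)$. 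An analogous splitting, built around $\gamma_R^0$ and a truncation at radius $R\to+\infty$, handles $\sigma=0$.

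For the \textbf{compactness} assertion, the hypothesis $\hat J_{r_n}^\sigma(E_n)\le C$ with $E_n\subset U$ plugged into the identity gives a uniform bound on $\int_{E_n}\int_{(B_1(x)\setminus E_n)\setminus B_{r_n}(x)}|x-y|^{-d-\sigma}\,dx\,dy$. For $\sigma\in(0,1)$ this quantity is comparable to $[\chi_{E_n}]_{W^{\sigma/2,2}}^2$, so $L^1$ precompactness of $\{\chi_{E_n}\}$ follows from the fractional Rellich--Kondrachov embedding. For $\sigma=0$ the analogous logarithmic compact embedding, which is the delicate new input of \cite{DeLucaNovPons}, takes over.

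For the $\Gamma$-\textbf{liminf} inequality, along a subsequence realizing the $\liminf$ and with $\chi_{E_n}\to\chi_E$ a.e., I would apply Fatou's lemma to the first term in the decomposition (using a.e. convergence of $\chi_{(B_1(x)\setminus E_n)\setminus B_{r_n}(x)}$) and dominated convergence to the second, whose kernel is bounded by $1$ on $\{|x-y|\ge 1\}$ and whose integrand is supported in $U\times U$. For the $\Gamma$-\textbf{limsup}, the recovery sequence $E_n\equiv E$ works: monotone convergence of the first term and $n$-independence of the second yield $\hat J_{r_n}^\sigma(E)\to\hat{\mathcal F}^\sigma(E)$, which is enough. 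The main obstacle is the case $\sigma=0$, where both the local kernel and the renormalization constant diverge logarithmically; there one must first establish the well-posedness of the limit defining $P^0$ and obtain the logarithmic compact embedding before replicating the scheme above, and this preliminary step is the principal contribution of \cite{DeLucaNovPons}.
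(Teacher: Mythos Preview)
The paper does not prove Theorem \ref{col}; it simply quotes it from \cite{DeLucaNovPons}, and you correctly identify this and outline the argument from that source. Your key identity
\[
\hat J_r^\sigma(E)=\int_E\int_{(B_1(x)\setminus E)\setminus B_r(x)}\frac{dy\,dx}{|x-y|^{d+\sigma}}-\int_E\int_{E\setminus B_1(x)}\frac{dy\,dx}{|x-y|^{d+\sigma}}
\]
is exactly the decomposition used in \cite{DeLucaNovPons}, and the rest of your plan (Fatou on the first term, $L^1$-continuity of the second, the constant recovery sequence, and fractional Sobolev compactness for $\sigma\in(0,1)$ replaced by the logarithmic embedding for $\sigma=0$) matches that reference.

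One point deserves more care than you give it: for compactness, the bounded quantity is the \emph{truncated} interaction $\int_{E_n}\int_{(B_1(x)\setminus E_n)\setminus B_{r_n}(x)}|x-y|^{-d-\sigma}$, which is not literally comparable to $[\chi_{E_n}]_{W^{\sigma/2,2}}^2$ because of the inner cutoff at scale $r_n$. In \cite{DeLucaNovPons} this is handled by a Fr\'echet--Kolmogorov argument (controlling $\|\chi_{E_n}(\cdot+h)-\chi_{E_n}\|_{L^1}$ uniformly via the truncated functional), not by a direct appeal to Rellich--Kondrachov; your phrasing glosses over this step.
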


Next proposition provides error estimates comparing the discrete functionals  $\mathcal{F}_{\varepsilon}^{\sigma}$ with its continuous counterpart $J_{r_\e}^\sigma$.

\begin{proposition}\label{stafond}
	Let $\sigma \in [0,1)$, and let $\{\mu_{\varepsilon}\}_{\varepsilon \in (0,1)}\subset \mathcal{EM}$ be  such that 
	$\mu_\e\in \mathcal{EM}_\e$ for all $\e\in(0,1)$ and $\frac{\varepsilon^d \omega_d}{C^d} \mu_{\varepsilon}(\mathbb{R}^d)\leq M$  for some $M>0$.
	
	Then, there exists
	$\{E_{\varepsilon}\}_{\varepsilon \in (0,1)} \subset \M_{f}(\mathbb{R}^d)$ such that the following properties hold:
	\begin{itemize}
		\item[(i)] $\frac{\e^d \omega_d}{C^d} \mu_{\varepsilon} - \chi_{E_\e} \weakstar 0$ as $\e\to 0$;
		\item[(ii)] 
		%$\frac{\e^d \omega_d}{C^d} \mu_{\varepsilon}(\mathbb{R}^d) - |E_\e|\to 0$ as $\e\to 0$;
		$\vert \vert E_{\varepsilon} \vert - \frac{\varepsilon^d \omega_d}{C^d} \mu_{\varepsilon}(\mathbb{R}^d) \vert 
		\le C(M,d) \frac{\sqrt \e}{ \sqrt r_\e}$;
		\item[(iii)] $\vert \mathcal{F}_{\varepsilon}^{\sigma}(\mu_{\varepsilon})-{J}_{r_{\varepsilon}}^{\sigma}(E_{\varepsilon}) \vert \le C(\sigma,d, M)|\gamma^\sigma_{r_\e}| \frac{\sqrt \e}{\sqrt{r_\e}}$.	 
	\end{itemize}
	In particular, as a consequence of \eqref{condr}, we have  
	\begin{itemize}
		\item[(iii')]
		$\vert \mathcal{\hat F}_{\varepsilon}^{\sigma}(\mu_{\varepsilon})-{\hat J}_{r_{\varepsilon}}^{\sigma}(E_{\varepsilon}) \vert \to 0  \quad \text{ as } \e \to 0$.  
	\end{itemize}
	Vice-versa, if $\{E_\e\}_{\e\in(0,1)}\subset \M_f(\R^d)$ is
	such that $\vert E_\e \vert \leq M$
	for some $M>0$, then there exists $\{\mu_\e\}_{\e\in(0,1)}\subset \mathcal{EM}$ with $\mu_\e\in \mathcal{EM}_\e$ for all $\e\in(0,1)$ and such that (i),  (ii), (iii) and (iii') hold.  
\end{proposition}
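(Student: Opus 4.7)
The plan is to construct $E_\e$ by replacing each atom $x_i$ of $\mu_\e$ by a cell $V_i \subset \R^d$ of Lebesgue measure $\omega_d\e^d/C^d$, centered at $x_i$ and contained in a ball of radius $O(\e)$, and setting $E_\e := \bigcup_i V_i$. The most direct choice is the translated reference cell $V_i := x_i + \e V_0$ with $V_0$ any fixed bounded Borel set of volume $\omega_d/C^d$ containing the origin; to avoid overlap issues in (ii), one may alternatively partition $\R^d$ into mesoscopic cubes $\{Q_\alpha\}$ of side $\ell_\e \sim \e$ and define $F_\alpha$ to be a centered sub-cube of $Q_\alpha$ of volume $N_\alpha\omega_d\e^d/C^d$, where $N_\alpha := \#(\spt\mu_\e\cap Q_\alpha)$, so that disjointness is automatic. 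For the converse direction, given $E_\e$, the atoms of $\mu_\e$ are placed on a suitably rescaled copy of the optimal packing $T^d$ intersected with $E_\e$ (safely in the interior, to respect the hard-sphere constraint).

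Properties (i) and (ii) follow by elementary arguments. For (i), since each cell has diameter $O(\e)$, a test function $\f\in\mathcal{C}_c^0(\R^d)$ is nearly constant on each $V_i$, hence $\int\f\,\chi_{E_\e}$ and $\tfrac{\e^d\omega_d}{C^d}\int\f\,d\mu_\e$ both approximate $\sum_i\f(x_i)\omega_d\e^d/C^d$, yielding the weak-$*$ limit. For (ii), one has $|E_\e|=\sum_i|V_i|=\tfrac{\e^d\omega_d}{C^d}\mu_\e(\R^d)$ when the cells are disjoint, and in the overlapping case the defect is controlled by the hard-sphere constraint $|x_i-x_j|\ge 2\e$, which bounds the number of overlapping neighbors to each $x_i$; in both cases one easily achieves the rate $\sqrt{\e/r_\e}$ claimed (typically much better).

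For (iii), I would write
\begin{equation*}
\mathcal{F}_\e^{\sigma}(\mu_\e) - J_{r_\e}^{\sigma}(E_\e) = \sum_{i,j}\int_{V_i}\!\int_{V_j}\chi_{|x-y|\ge r_\e}\,\frac{dx\,dy}{|x-y|^{d+\sigma}} - \sum_{\substack{i\ne j \\ |x_i-x_j|\ge r_\e}}\frac{(\e^d\omega_d/C^d)^2}{|x_i-x_j|^{d+\sigma}},
\end{equation*}
and split the contributions according to $|x_i-x_j|$. The diagonal terms $i=j$ vanish since $\mathrm{diam}(V_i)=O(\e)\ll r_\e$ for $\e$ small. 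The far-range terms $|x_i-x_j|\ge 2r_\e$ are controlled by the mean value estimate $\left||x-y|^{-d-\sigma}-|x_i-x_j|^{-d-\sigma}\right|\lesssim \e\,|x_i-x_j|^{-d-\sigma-1}$ on $V_i\times V_j$, summed using a Riesz-type bound as in \eqref{Riesz}. The near-range pairs, i.e.\ those with $|x_i-x_j|$ in an $O(\e)$-annulus around $r_\e$, are estimated by counting: the hard-sphere constraint gives at most $C(d)(r_\e/\e)^{d-1}$ such partners per atom, each pair contributing $O((\e^d)^2/r_\e^{d+\sigma})$, for a total of $O(M\,r_\e^{-\sigma}\cdot\e/r_\e)$, which is dominated by the claimed $|\gamma_{r_\e}^\sigma|\sqrt{\e/r_\e}$ since $\e/r_\e\le\sqrt{\e/r_\e}$ eventually. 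Property (iii$'$) then follows combining (ii), (iii) and the scaling \eqref{condr}.

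The main obstacle lies precisely in the near-range regime, where the sharp cutoff $|x-y|\ge r_\e$ creates an intrinsic mismatch between the discrete sum and the double integral: a pair $(x_i,x_j)$ with $|x_i-x_j|$ just above $r_\e$ is fully counted in $\mathcal{F}_\e^\sigma(\mu_\e)$, whereas the region $\{(x,y)\in V_i\times V_j:|x-y|\ge r_\e\}$ covers only a fraction of $V_i\times V_j$. Handling this carefully via the hard-sphere packing count in an $O(\e)$-annulus around the cutoff scale is precisely what produces the factor $\sqrt{\e/r_\e}$, and explains why the mesoscopic separation $\e\ll r_\e$ encoded in \eqref{condr} is imposed.
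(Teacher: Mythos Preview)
Your decomposition of the error in (iii) into far-range, near-range and diagonal parts is essentially the paper's strategy, and your estimates there are correct \emph{provided} the cells have diameter $O(\e)$ and are disjoint. The gap is that no such cells exist in general, and this is precisely what forces the paper to work at a genuinely mesoscopic scale.

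Concretely: since $C^d<1$, your reference cell $V_0$ has volume $\omega_d/C^d>\omega_d$, hence cannot be contained in $B_1(0)$. For a configuration $\mu_\e$ close to an optimal packing (the only interesting case), the translates $x_i+\e V_0$ then overlap by a positive fraction, and the defect $\big|\sum_i|V_i|-|\bigcup_i V_i|\big|$ is of order $N\e^d\sim M$, not $o(1)$; so (ii) fails, and the identity you write for $J_{r_\e}^\sigma(E_\e)$ in (iii) is no longer valid either. Your alternative via cubes of side $\ell_\e\sim\e$ has the same defect: by Lemma~\ref{sviluppoasintoticodensità} the local packing density at scale $r$ exceeds $C^d$ by $O(1/r)$, so with $r=\ell_\e/\e\sim 1$ one gets $N_\alpha\,\omega_d\e^d/C^d$ exceeding $|Q_\alpha|$ by a fixed positive fraction, and the capped sub-cubes again miss $\Theta(M)$ of the mass.

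The paper resolves this by choosing the cube side to be $\rho_\e:=\sqrt{\e r_\e}$, so that the density excess becomes $O(\e/\rho_\e)=O(\sqrt{\e/r_\e})$, which after summing over $O(M\rho_\e^{-d})$ full cubes yields exactly the bound in (ii). The price is that the cell diameter is now $\rho_\e$ rather than $\e$, so the near-range annulus in (iii) has width $O(\rho_\e)$ instead of $O(\e)$; repeating your counting argument with this width gives an error $O(|\gamma_{r_\e}^\sigma|\,\rho_\e/r_\e)=O(|\gamma_{r_\e}^\sigma|\sqrt{\e/r_\e})$, matching (iii). Thus $\rho_\e=\sqrt{\e r_\e}$ is exactly the scale that balances the volume error against the cutoff-annulus error, and this balance is the missing idea in your proposal.
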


\begin{proof}
	For every $\e>0$, set $\rho_\e:= \sqrt{\e r_\e}$.   Let $Q:= [0,1)^{d}$ and set
	\begin{equation*}
	\mathfrak{Q}^{\rho_{\varepsilon}}:= \{\rho_{\varepsilon}(Q+v),\; v\in \mathbb{Z}^d  \}.
	\end{equation*}	
	Let moreover
	$$
	\mathfrak{P}^{\rho_{\varepsilon}}:=\{q\in \mathfrak{Q}^{\rho_{\varepsilon}}: 
	\frac{\e^d\omega_d}{C^d}\mu_{\varepsilon}(q) \ge \rho_\e^d \}.
	$$
	For all $q\in \mathfrak{Q}^{\rho_{\varepsilon}}$ we denote by $\tilde q$  the square concentric to $q$ and such that
	$\tilde q=q$ if $q\in \mathfrak{P}^{\rho_{\varepsilon}}$, 
	while $ |\tilde q| = \frac{\e^d\omega_d}{C^d} \mu_\e(q)$ if $q\in  
	\mathfrak{Q}^{\rho_{\varepsilon}}\setminus \mathfrak{P}^{\rho_{\varepsilon}}$.
	
%	Since $\mu_\e$ have uniformly bounded support, there exists $M=M(U)>0$ such that $$
%	\omega_d \e^d \mu_{\varepsilon}(\mathbb{R}^d)\leq M \qquad \text{ for all } \e\in (0,1).
%	$$ 
	By Lemma \ref{sviluppoasintoticodensità} and by easy scaling arguments  we deduce that
	\begin{equation} \label{berna2}
	\# 	\mathfrak{P}^{\rho_{\varepsilon}} \leq M \rho_\e^{-d},
	\qquad 
	0\le  \frac{\varepsilon^d \omega_d }{C^d} \mu_{\varepsilon}(q) - |\tilde q| \le C(d) \e \rho_\e^{d-1}
	\quad \text{ for all } q\in \mathfrak{Q}^{\rho_{\varepsilon}}.
	\end{equation}
	We define $E_{\varepsilon}:= \cup_{q\in\mathfrak{Q}^{\rho_{\varepsilon}}} \tilde q$.
	By \eqref{berna2} we have that 
	\begin{equation}\label{berna3}
	\vert \vert E_{\varepsilon} \vert - \frac{\varepsilon^d \omega_d}{C^d} \mu_{\varepsilon}(\mathbb{R}^d) \vert 
	%\rightarrow 0 \quad \text{for $\varepsilon \rightarrow 0^{+}$}.
	\le M C(d) \frac{\e}{ \rho_\e} 
	=M C(d) \frac{\sqrt \e}{ \sqrt r_\e}, 
	\end{equation}	
	which proves property (ii). 
	
	Let us pass to the proof of (i).
%	 For every $q\in
%	\mathfrak{Q}^{\rho_{\varepsilon}}$, we denote by $
%	x(q)$ the center of $q$. 
	Given $\f\in\mathcal C^1_c(\R^d)$, by \eqref{berna2} we have
	\begin{equation}
	\Big |\langle	\frac{\e^d \omega_{d}}{C^d} \mu_{\varepsilon} - \chi_{E_\e},\f\rangle \Big| 
	\le C(d,M) \|\nabla \f\|_{L^\infty} \rho_\e + 
	\|\f\|_{L^\infty} C(d,M) \frac{\e}{\rho_\e},
	\end{equation}	
	which tends to $0$ as $\e\to 0$.
	
	We pass to the proof of (iii). 	
	First notice that by construction $\vert E_\e \vert \leq M+1$ for $\e$ small enough. Then, by rearrangement (see for instance Lemma A.6 of \cite{DeLucaNovPons}) it is easy to see that  $ -J^\sigma_{r_\e}(E_\e)\le  C(\sigma,d, M) |\gamma^\sigma_{r_\e}|$. 
Therefore, in order to prove (iii)	
	it is enough to show that
	\begin{align}\label{sta1}
	& - {J}_{r_{\varepsilon}}^{\sigma}(E_{\varepsilon}) \le - \mathcal{F}_{\varepsilon}^{\sigma}(\mu_{\varepsilon})  \big( 1 + C(\sigma,d)  \frac{\sqrt \e}{\sqrt r_\e}\big)
	+ C(\sigma,d) |\gamma_{r_\e}^\sigma| \frac{\sqrt \e}{\sqrt r_\e},
	\\
	\label{sta2}
	& - \mathcal{F}_{\varepsilon}^{\sigma}(\mu_{\varepsilon})
	\le - {J}_{r_{\varepsilon}}^{\sigma}(E_{\varepsilon}) \big( 1 + C(\sigma,d)  \frac{\sqrt \e}{\sqrt r_\e} \big)  + C(\sigma,d) |\gamma_{r_\e}^\sigma| \frac{\sqrt \e}{\sqrt r_\e}.		 
	\end{align}	
	We will prove only \eqref{sta2}, the proof of \eqref{sta1} being fully analogous. 
	For all $p, \, q\in  	\mathfrak{Q}^{\rho_{\varepsilon}}$ with $p\neq q$, set 
	\begin{align*}
	&I(p,q):= \{(x,y) \in \spt(\mu) \times \spt(\mu) \cap p\times q\},\\
	&R_\e(p,q) := \dist(p,q), 
	\qquad 
	\tilde R_\e(p,q) :=  \max_{x\in p, y\in q}\dist(x,y), 
	\qquad  m_\e(q):= 
	\frac{\e^d \omega_d}{C^d} \mu_\e(q).
	\end{align*}

	By \eqref{berna2} we have that 
	\begin{equation}\label{berna3}
	1\le \frac{m_\e(q)}{|\tilde q|}\le 1+C(d) \frac{\e}{\rho_\e}
	\qquad \text{ for all } q\in 
	\mathfrak{Q}^{\rho_{\varepsilon}}\, .
	\end{equation}
	Moreover, since $\tilde R_\e(p,q) \le R_\e(p,q) + C(d) \rho_\e$, it follows that
	there exists $C(\sigma,d)>0$ such that, for $\e$ small enough,
	\begin{equation}\label{berna4}
	\Big(\frac{\tilde R_\e(p,q)}{ R_\e(p,q)}\Big)^{d+\sigma} \le \big(1+ C(\sigma,d) \frac{\rho_\e}{R_\e(p,q)}\big) 
	\quad \text{for all } q, p\in 
	\mathfrak{Q}^{\rho_{\varepsilon}}: \, R_\e(p,q)\neq 0.
	\end{equation}
	Moreover, let 
	\begin{align*}
	& \mathscr Q^+:= \{(p,q) \in \mathfrak{Q}^{\rho_{\varepsilon}} \times \mathfrak{Q}^{\rho_{\varepsilon}} : R_\e(p,q) > r_\e \};\\
	&\mathscr Q^-:= \{(p,q) \in \mathfrak{Q}^{\rho_{\varepsilon}} \times \mathfrak{Q}^{\rho_{\varepsilon}} : \tilde R_\e(p,q) < r_\e \};\\
	&\mathscr  Q^=:=  \mathfrak{Q}^{\rho_{\varepsilon}} \times \mathfrak{Q}^{\rho_{\varepsilon}} \setminus (\mathscr Q^+ \cup \mathscr Q^-).
	\end{align*}
	Recalling that $\frac{\varepsilon^d \omega_d}{C^d} \mu_{\varepsilon}(\mathbb{R}^d)\leq M$ and \eqref{berna2}, it easily follows that, for $\e$ small enough 
	\begin{multline}\label{scoerr}
	(\frac{\e^d \omega_d}{C^d})^2 \sum_{(p,q)\in \mathscr Q^=} \sum_{(x,y)\in I(p,q)} |x-y|^{-d-\sigma}
	\\
	\le C(\sigma,d) r_\e^{-d-\sigma} r_\e^{d-1} \rho_\e = 
	C(\sigma,d ) r_\e^{-\sigma}  \frac{\sqrt \e}{\sqrt r_\e}
	\le C(\sigma,d) |\gamma_{r_\e}^\sigma|  \frac{\sqrt \e}{\sqrt r_\e}
	.
	\end{multline}
	By \eqref{berna3}, \eqref{berna4} and \eqref{scoerr} we have that, for $\e$ small enough, 
	\begin{multline*}
	- \mathcal{F}_{\varepsilon}^{\sigma}(\mu_{\varepsilon}) \le
	(\frac{\e^d \omega_d}{C^d})^2 \sum_{(p,q)\in \mathscr Q^+}
	\sum_{(x,y)\in I(p,q)} |x-y|^{-d-\sigma}
	+ C(\sigma,d) |\gamma_{r_\e}^\sigma|  \frac{\sqrt \e}{\sqrt r_\e}
	\\
	\le \sum_{(p,q)\in \mathscr Q^+} 
	m_\e(p) m_\e(q) R_\e(p,q)^{-d-\sigma}
	+ C(\sigma,d) |\gamma_{r_\e}^\sigma|  \frac{\sqrt \e}{\sqrt r_\e}
	\\
	\le \sum_{(p,q)\in \mathscr Q^+} \Big(1+C(d) \frac{\e}{\rho_\e}\Big)^2 \Big(1+  C(\sigma,d) \frac{\rho_\e}{R_\e(p,q)}\Big)  
	|\tilde p| |\tilde q| \tilde R_\e(p,q)^{-d-\sigma} 
	+ C(\sigma,d) |\gamma_{r_\e}^\sigma|  \frac{\sqrt \e}{\sqrt r_\e} 
	\\
	\le\Big(1+C(\sigma,d) \frac{\sqrt \e}{\sqrt r_\e} \Big)
	(-{J}_{r_{\varepsilon}}^{\sigma}(E_{\varepsilon}) )
	+ C(\sigma,d,M) |\gamma_{r_\e}^\sigma|  \frac{\sqrt \e}{\sqrt r_\e}.
	%\\
	%\le
	%-{J}_{r_{\varepsilon}}^{\sigma}(E_{\varepsilon%})
	%+  C(\sigma,d) |\gamma^\sigma_{r_\e}| %\frac{\sqrt \e}{\sqrt r_\e} 
	%.
	\end{multline*}
	Finally, property (iii') is an easy consequence of properties (ii), (iii) and of \eqref{condr}.
	
	The proof of the final claim of the proposition is fully analogoug to the  proof of the first part of the proposition.
\end{proof}

\subsection{Compactness and $\Gamma$-convergence}
Here we prove $\Gamma$-convergence and compactness properties for the functionals 
$\hat{\mathcal{F}}_{\varepsilon}^{\sigma}$
defined in \eqref{desi} and \eqref{desi0}. 
Conversely to what done for the integrable case $\sigma\in (-d,0)$, here we will present only the basic case,  assuming as in \cite{DeLucaNovPons}
that there are no forcing terms; we enforce  compactness assuming that the empirical measures have uniformly bounded support.
 
\begin{theorem}\label{gacofinal}
	Let $\sigma \in [0,1)$. The following compactness and  $\Gamma$-convergence results hold.
	\begin{itemize}
		\item[]{\bf Compactness:}
		Let $U \subset \mathbb{R}^d$ be an open bounded set and let $M>0$.  Let $\{\mu_{\varepsilon}\}_{\varepsilon\in(0,1)} \subset \mathcal{M}_{b}(\mathbb{R}^d)$ be 
	such that 
	\begin{equation}
	\hat{\mathcal{F}}_{\varepsilon}^{\sigma}(\mu_{\varepsilon})\leq M, \; \; \qquad 
	\spt(\mu_{\varepsilon}) \subset U \; \; \qquad \forall \varepsilon \in (0,1) \,. \label{pinco1}
	\end{equation}
	Then, $ \frac{\varepsilon^d \omega_d}{C^d} \mu_{\varepsilon} \rightarrow \chi_{E} \mathcal L^d$ tightly, as $ \varepsilon \rightarrow 0^{+}$,
	for some measurable set  $E \subset U$.
	\vskip4pt
			\item[]{\bf $\Gamma$-convergence:}
			The following $\Gamma$-convergence result holds true.	
	\begin{enumerate}
		\item ($\Gamma$-liminf inequality) For every $E \in \mathcal{M}_{f}(\mathbb{R}^d)$ and for every $\{\mu_{\varepsilon}\}_{\varepsilon \in (0,1)} \subset  \mathcal{M}_{b}(\R^d)$  with $\frac{\varepsilon^d \omega}{C^d}\mu_{\varepsilon} \rightarrow \chi_{E} \mathcal{L}^d$ tightly in $\mathcal{M}_{b}(\mathbb{R}^d)$, we have 
		\begin{equation*}
		\hat{\mathcal{F}}^{\sigma}(\chi_{E}\mathcal{L}^d) \leq \liminf_{\varepsilon \rightarrow 0^+} \hat{\mathcal{F}}_{\varepsilon}^{\sigma}(\mu_{\varepsilon}).
		\end{equation*}
		\item ($\Gamma$-limsup inequality) For every $E \in \mathcal{M}_{f}(\mathbb{R}^d)$, there exists a sequence $\{\mu_{\varepsilon}\}_{\varepsilon \in (0,1)}$  with $\mu_{\varepsilon} \in \mathcal{EM}_{\varepsilon}$ for all $\varepsilon \in (0,1)$ such that $\frac{\varepsilon^d \omega_d}{C^d} \mu_{\varepsilon} \rightarrow \chi_{E}\mathcal{L}^d$ tightly in $\mathcal{M}_{b}(\mathbb{R}^d)$ and 
		\begin{equation*}
		\hat{\mathcal{F}}^{\sigma}(\chi_{E}\mathcal{L}^d) \geq \limsup_{\varepsilon \rightarrow 0^+} \hat{\mathcal{F}}_{\varepsilon}^{\sigma}(\mu_{\varepsilon}).
		\end{equation*}
	\end{enumerate}
\end{itemize}
\end{theorem}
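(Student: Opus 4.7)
The strategy is to reduce the discrete statement to its continuous counterpart Theorem \ref{col} from \cite{DeLucaNovPons}, using Proposition \ref{stafond} as the bridge. The key preliminary observation is that whenever $\mu_\varepsilon\in\mathcal{EM}_\varepsilon$ has support in a bounded set $U$, the hard sphere constraint forces $\#\spt(\mu_\varepsilon)\le C(U,d)\varepsilon^{-d}$, so that $\frac{\varepsilon^d\omega_d}{C^d}\mu_\varepsilon(\mathbb{R}^d)$ is automatically uniformly bounded; this is what makes Proposition \ref{stafond} applicable in every step below.

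For compactness, I start from the bounds in \eqref{pinco1}. The packing estimate gives boundedness of total mass, so Proposition \ref{stafond} produces sets $E_\varepsilon$ contained in a fixed bounded neighbourhood of $U$ (from the construction, only cubes of side $\rho_\varepsilon\to0$ intersecting $U$ are used). Property (iii') transfers the energy bound to $\hat J_{r_\varepsilon}^\sigma(E_\varepsilon)\le M+o(1)$, so the compactness part of Theorem \ref{col} yields $\chi_{E_\varepsilon}\to\chi_E$ in $L^1$ for some $E\subset U$ along a subsequence. Combining property (i) (weak-$*$ convergence of the difference $\frac{\varepsilon^d\omega_d}{C^d}\mu_\varepsilon-\chi_{E_\varepsilon}$ to zero) with property (ii) (mass defect vanishes) yields the claimed tight convergence to $\chi_E\mathcal{L}^d$.

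For the $\Gamma$-liminf, I pass to a subsequence realizing the liminf and assume it finite, so $\mu_\varepsilon\in\mathcal{EM}_\varepsilon$ and the total mass is bounded by tight convergence. Proposition \ref{stafond} again yields $E_\varepsilon$; property (i) combined with (ii) and the tight convergence of $\frac{\varepsilon^d\omega_d}{C^d}\mu_\varepsilon$ forces $\chi_{E_\varepsilon}\to\chi_E$ in $L^1$. The $\Gamma$-liminf part of Theorem \ref{col} together with (iii') then gives
\begin{equation*}
\hat{\mathcal{F}}^\sigma(\chi_E\mathcal{L}^d)\le\liminf_{\varepsilon\to0^+}\hat J_{r_\varepsilon}^\sigma(E_\varepsilon)=\liminf_{\varepsilon\to0^+}\hat{\mathcal{F}}_\varepsilon^\sigma(\mu_\varepsilon).
\end{equation*}
For the $\Gamma$-limsup I reverse the direction: fixing $E\in\mathcal{M}_f(\mathbb{R}^d)$, the recovery part of Theorem \ref{col} applied with parameter sequence $r_\varepsilon$ provides $E_\varepsilon$ with $\chi_{E_\varepsilon}\to\chi_E$ in $L^1$ and $\limsup\hat J_{r_\varepsilon}^\sigma(E_\varepsilon)\le\hat{\mathcal{F}}^\sigma(\chi_E\mathcal{L}^d)$; since $|E_\varepsilon|$ is uniformly bounded, the vice-versa part of Proposition \ref{stafond} supplies $\mu_\varepsilon\in\mathcal{EM}_\varepsilon$ satisfying (i)--(iii'), and these three properties deliver both the tight convergence and the $\limsup$ bound.

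The genuine difficulty is entirely absorbed into Proposition \ref{stafond}: the mesoscopic scale $\rho_\varepsilon=\sqrt{\varepsilon r_\varepsilon}$ is chosen precisely so that the multiplicative and additive discrete/continuous errors are of order $|\gamma_{r_\varepsilon}^\sigma|\sqrt{\varepsilon/r_\varepsilon}$, which is $o(1)$ under the scaling assumption \eqref{condr} (both in the regime $\sigma\in(0,1)$, where $|\gamma_{r_\varepsilon}^\sigma|\sim r_\varepsilon^{-\sigma}$, and at $\sigma=0$, where $|\gamma_{r_\varepsilon}^0|\sim|\log r_\varepsilon|$). Once Proposition \ref{stafond} is in hand, the present theorem is essentially a mechanical transfer from the continuous $\Gamma$-convergence result of \cite{DeLucaNovPons}.
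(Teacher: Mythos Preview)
Your proposal is correct and follows essentially the same route as the paper: reduce everything to the continuous result (Theorem \ref{col}) via the bridge Proposition \ref{stafond}, using (iii') to transfer energy bounds in both directions and (i)--(ii) to transfer convergence. You are in fact slightly more careful than the paper in two places: you make explicit the packing bound $\frac{\varepsilon^d\omega_d}{C^d}\mu_\varepsilon(\mathbb{R}^d)\le C(U,d)$ needed to invoke Proposition \ref{stafond}, and in the $\Gamma$-liminf you spell out why $\chi_{E_\varepsilon}\to\chi_E$ in $L^1$ (weak-$*$ plus mass convergence of characteristic functions to a characteristic function), a step the paper leaves implicit.
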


\begin{proof}
In order to prove the compactness property, first notice that by \eqref{pinco1} we deduce  that $\mu_{\varepsilon} \in \mathcal{EM}_{\varepsilon}$ for all $\varepsilon \in (0,1)$. From Proposition \ref{stafond} we obtain that there exists $\varepsilon_0 \in (0,1)$ such that
\begin{equation*}
\vert \hat{\mathcal{F}}_{\varepsilon}^{\sigma}(\mu_{\varepsilon})-\hat{J}_{r_\varepsilon}^{\sigma}(E_\varepsilon) \vert < 1 \quad \forall \varepsilon<\varepsilon_{0},
\end{equation*}
where $\{E_{\varepsilon}\}_\e$ is exactly the sequence of sets provided by Proposition \ref{stafond}. 
We deduce that $\hat{J}_{r_\varepsilon}^{\sigma}(E_\varepsilon)$ is bounded;
by  Theorem \ref{col} there exists $E \in \mathcal{M}_{f}(\mathbb{R}^d)$ such that, up to a subsequence, $\chi_{E_{\varepsilon}} \rightarrow \chi_{E}$ in $\mathrm{L}^1$ for $\varepsilon \rightarrow 0^+$.  Therefore, again by Proposition \ref{stafond}  $\frac{\varepsilon^d \omega_d}{C^d}\mu_{\varepsilon} \rightarrow \chi_{E}$ tightly as $ \varepsilon \rightarrow 0^+$.

Let us pass to the proof of the $\Gamma$-liminf inequality. By  Proposition \ref{stafond} 
%we have that exist $\{E_{\varepsilon}\}_{\varepsilon \in (0,1)} \subset \mathrm{M}_{f}(\mathbb{R}^d)$ such that $\frac{\varepsilon^d \omega_d}{C^d}\mu_{\varepsilon}- \chi_{E_{\varepsilon}} \mathcal{L}^d \rightarrow 0$ tightly as $\varepsilon \rightarrow 0^+$, 
%$\vert E_{\varepsilon} \vert = \frac{\varepsilon^d \omega_d}{C^d}\mu_{\varepsilon}(\mathbb{R}^d)$ 
%and
%\begin{equation*} \label{arremba1}
%\vert \mathcal{F}_{\varepsilon}^{\sigma}(\mu_{\varepsilon})-\hat{J}_{r_{\varepsilon}}^{\sigma}(E_{\varepsilon}) \vert \rightarrow 0, \quad \text{for $\varepsilon \rightarrow 0^+$,} \tag{$\alpha$}
%\end{equation*} 
%where $\hat{J}_{r_{\varepsilon}}^{\sigma}$ is the function define in \eqref{funzionaliriesznormalizati}. 
and by Theorem \ref{col}  we obtain that
\begin{equation*}
\begin{split}
\hat{\mathcal{F}}^{\sigma}(\chi_{E}\mathcal{L}^d) \leq & \liminf_{\varepsilon \rightarrow 0^+} \hat{J}_{r_{\varepsilon}}^{\sigma}(\mu_{\varepsilon})  \\
\leq & \liminf_{\varepsilon \rightarrow 0^+} (\hat{J}_{r_{\varepsilon}}^{\sigma}(E_{\varepsilon})- \hat{\mathcal{F}}_{\varepsilon}^{\sigma}(\mu_{\varepsilon}))+ \liminf_{\varepsilon \rightarrow 0^+} \hat{\mathcal{F}}_{\varepsilon}^{\sigma}(\mu_{\varepsilon})  \\
\leq & \liminf_{\varepsilon \rightarrow 0^+} \hat{\mathcal{F}}_{\varepsilon}^{\sigma}(\mu_{\varepsilon}).
\end{split}
\end{equation*}
Hence the $\Gamma$-liminf inequality holds. 

We now prove the $\Gamma$-limsup inequality. 
%By a diagonal argument it is sufficient prove the $\Gamma$-limsup inequality for all $E \subset \mathbb{R}^d$ open and bounded set with $\partial E \in C^1$. 
%Set $E_\e\equiv E$ for all $\e>0$. from  \cite[Theorem 6.5]{DeLucaNovPons} 
Let $\{E_\e\}_\e$ be the recovery sequence provided by Theorem \ref{col};  we have 
$$
\jj^\sigma_{r_\e}(E_\e) \to  \hat{\mathcal F}^\sigma(\chi_E \mathcal{L}^d) \qquad \text{ as } \e\to 0.
$$
Let now $\{\mu_\e\}_{\e\in(0,1)}$ be the sequence provided by the second part of Proposition \ref{stafond}. Then, we have  
\begin{equation*}
 \vert \hat{\mathcal{F}}_{\varepsilon}^{\sigma}(\mu_{\varepsilon})- \hat{\mathcal F}^\sigma(\chi_E \mathcal{L}^d)\vert \leq 
 \vert \hat{\mathcal{F}}_{\varepsilon}^{\sigma}(\mu_{\varepsilon})-\hat{J}_{r_{\varepsilon}}^{\sigma}(E_\e) \vert 
 +   \vert \hat{J}_{r_{\varepsilon}}^{\sigma}(E)- \hat{\mathcal F}^\sigma(\chi_E \mathcal{L}^d)\vert,
\end{equation*}
which, in view of Proposition \ref{stafond}(iii'), tends to $0$ as $\e\to 0$.
\end{proof}
\section*{Acknowledgements}
The authors are members of the Gruppo Nazionale per l'Analisi Matematica, la Probabilit\`a e le loro Applicazioni (GNAMPA) of the Istituto Nazionale di Alta Matematica (INdAM).

Tha authors thank L. De Luca and M. Novaga for useful discussions at the early stage of this project.

%%%%% BIBLIOGRAFIA

\end{document}